\newtheorem{thm}{Theorem}[section]
\newtheorem{conj}[thm]{Conjecture}
\newtheorem{cor}[thm]{Corollary}
\newtheorem*{cl}{Claim}
\newtheorem{lem}[thm]{Lemma}
\newtheorem{prop}[thm]{Proposition}
\theoremstyle{remark}
\newdimen\margin   
\def\textno#1&#2\par{%
   \margin=\hsize
   \advance\margin by -4\parindent
          \setbox1=\hbox{\sl#1}%
   \ifdim\wd1 < \margin
      $$\box1\eqno#2$$%
   \else
      \bigbreak
      \hbox to \hsize{\indent$\vcenter{\advance\hsize by -3\parindent
      \sl\noindent#1}\hfil#2$}%
      \bigbreak
   \fi}
\begin{document} 

\title{Embedding cycles of given length in oriented graphs}
\author{Daniela K\"uhn, Deryk Osthus, Diana
Piguet\thanks{The research leading to these results has received funding from the European Union Seventh
Framework Programme (FP7/2007-2013) under grant agreement no. PIEF-GA-2009-253925.}}
\date{}
\maketitle 

\begin{abstract}
Kelly, K\"uhn and Osthus conjectured that for any
$\ell\ge 4$ and the smallest number $k\ge 3$ that does not
divide~$\ell$, any large enough oriented graph~$G$ with
$\delta^+(G), \delta^-(G)\ge \lfloor |V(G)|/k\rfloor +1$
contains a directed cycle of length~$\ell$. We prove this
conjecture asymptotically for the case when~$\ell$ is large
enough compared to $k$ and $k\ge 7$. The case
when~$k\le 6$ was already settled asymptotically by Kelly, K\"uhn and Osthus. 
\end{abstract}

\section{Introduction}

\emph{Oriented graphs} are obtained from undirected graphs by
giving each edge a direction. The \emph{minimum semidegree}
$\delta^0(G)$ of an oriented graph~$G$ is the minimum of its
 \emph{minimum outdegree} $\delta^+(G)$ and of its
 \emph{minimum indegree} $\delta^-(G)$. A \emph{directed} cycle is a cycle in which all the edges are oriented
consistently. Directed paths and walks are defined analogously.
An \emph{$\ell$-cycle} is a directed cycle of length exactly~$\ell$. The girth~$g(G)$ of an oriented
 graph~$G$ is the smallest number~$\ell$ so that~$G$ contains
 an $\ell$-cycle.

 A central problem in this area is the following
 conjecture by Caccetta and
 H\"aggkvist~\cite{cite:Cacc-Hagg}.

\begin{conj}[Caccetta-H\"aggkvist]\label{conj:Cacc-Hagg}
An oriented graph on~$n$ vertices with minimum outdegree~$d$
has girth at most $\lceil \frac
nd\rceil$.
\end{conj}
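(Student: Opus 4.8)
\medskip
\noindent\textbf{Proof strategy (and why it is hard).} This is one of the central open problems in the area, so what follows is an honest outline of the available lines of attack rather than a finished argument; the obstacle flagged at the end is genuine. The plan has three parts: pin down the extremal configurations, reduce to the triangle case and attack it by counting, and then confront the gap that none of the known methods close.

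The first thing I would do is record the extremal examples, since any proof must be tight against them. Take a directed $k$-cycle, blow up each vertex into an independent set of size $m$, and insert all edges between consecutive classes; call this $C_k[\overline{K_m}]$. It is an oriented graph on $n=km$ vertices with $\delta^+(G)=\delta^-(G)=m=n/k$ and girth exactly $k=\lceil n/d\rceil$. So the conjecture says precisely that these blow-ups (and small perturbations of them) are the only oriented graphs forcing the girth that high; in particular no purely averaging estimate can work, because the extremal graph is regular.

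I would then reduce to the triangle case $d=\lceil n/3\rceil$, which already carries the full difficulty. Assume $G$ has no directed $3$-cycle. The basic structural fact is that for every edge $v\to w$ the sets $N^+(w)$ and $N^-(v)$ are disjoint subsets of $V(G)\setminus\{v,w\}$ (a common vertex would close a triangle), whence $2d\le |N^+(w)|+|N^-(v)|\le n-2$; this alone only gives $d\le (n-2)/2$. Pushing to the truth is then a weighted counting problem: one double-counts directed walks of length two and three, exploits that every out-neighbourhood $N^+(v)$ spans no edge back to $v$, and feeds the resulting inequalities into a linear, and ultimately a semidefinite, relaxation. This is essentially the route by which the implication ``$\delta^0(G)\ge 0.3465\,n$ forces a directed triangle'' has been obtained via Razborov's flag-algebra method, and extracting the exact constant $n/3$ would additionally require a stability step forcing $G$ to lie within $o(n^2)$ edges of $C_3[\overline{K_{n/3}}]$ followed by an exact clean-up of the last vertices.

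The main obstacle is precisely this last mile: no known reduction, induction, absorption, or relaxation closes the gap between the best counting/SDP threshold and the conjectured one, and the general $d$ inherits the same difficulty through the general extremal example $C_k[\overline{K_{n/k}}]$. For the weaker statements that the present paper can actually establish — asymptotic versions, and the variant asking only for a cycle of some \emph{prescribed} length $\ell$ rather than a shortest one — the workable plan is different: pass to a dense reduced digraph via the Regularity Lemma, locate in it a robust cyclic structure using the semidegree hypothesis, and then use regularity to embed a directed cycle of length exactly $\ell$. That is the approach I would take there, and presumably the one developed in the remainder of this paper.
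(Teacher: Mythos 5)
You are right not to pretend to a proof: the Caccetta--H\"aggkvist statement is a \emph{conjecture}, and the paper neither proves it nor claims to. It is quoted purely as motivating background; the introduction explicitly remarks that even the case $\lceil n/d\rceil=3$ is open. Your summary of the extremal blow-up $C_k[\overline{K_{n/k}}]$, the reduction to the triangle case, the disjointness of $N^+(w)$ and $N^-(v)$ for an edge $v\to w$, and the flag-algebra/counting state of the art is accurate, and your diagnosis of the ``last mile'' gap is the correct assessment of why no proof exists.

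One small calibration to keep in mind when reading the rest of the paper: the authors do not attack Conjecture~\ref{conj:Cacc-Hagg} itself, even via regularity. What they actually use is Shen's partial result (Theorem~\ref{thm:Shen}), a weaker quantitative bound on girth in terms of $n/\delta^+$, which is enough for their Corollary~\ref{cor-shen} and Lemma~\ref{lem-diam}. The regularity-lemma machinery you sketch at the end does match the route used for the paper's actual main theorem about $\ell$-cycles (via Lemma~\ref{lem:KKO} and the reduction to closed directed walks), but that is a separate statement from the Caccetta--H\"aggkvist conjecture, which remains an unproved assumption throughout.
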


Despite much work over the years by a large number of
researchers even the case $\lceil\frac nd\rceil=3$
remains open. We refer to~\cite{cite:Sulivan} for a survey on the topic and
to~\cite{cite:HKN-ArXiv} for the currently best bound for the case
$\lceil\frac nd\rceil=3$.  The natural and related question of what minimum semidegree in an
oriented graph forces cycles of length exactly~$\ell\ge 4$ was raised in~\cite{cite:KKO}.

\begin{conj}[Kelly, K\"uhn, Osthus]\label{conj:KDD}
Let $\ell\ge 4$ be an integer and let $k\ge 3$ be
minimal such that~$k$ does not divide~$\ell$. Then there
exists an integer $n_0=n_0(\ell)$ such that every oriented graph~$G$ on $n\ge n_0$ vertices with minimum semidegree
$\delta^0(G)\ge\lfloor n/k\rfloor+1$ contains an $\ell$-cycle.
\end{conj}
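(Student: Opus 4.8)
The plan is to combine the Diregularity Lemma with a stability argument: outside an extremal regime one reads off a suitable short closed walk in the reduced digraph, while in the extremal regime $G$ is forced to be close to a blow-up of the directed $k$-cycle $\vec{C_k}$ and one argues by hand. Assume throughout that $k\ge 7$, that $\ell$ is large compared with $k$, and that $n$ is large, with $\delta^0(G)\ge\lfloor n/k\rfloor+1$; the $+1$ is needed precisely in the extremal case. Note first that, as $k$ is minimal with $k\nmid\ell$, each of $3,4,\dots,k-1$ divides $\ell$; since $k\ge 7$ this includes $4$, so also $2\mid\ell$, and hence $\operatorname{lcm}(1,2,\dots,k-1)\mid\ell$ whereas $k\nmid\ell$. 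This divisibility input drives all the arithmetic below.

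\textbf{Step 1 (reduction to a closed walk in the reduced digraph).} Apply the Diregularity Lemma to $G$, obtaining clusters $V_1,\dots,V_M$, an exceptional set, and the reduced digraph $R$ on the clusters, keeping only those ordered pairs that are $\varepsilon$-regular of density at least $d$; a standard computation gives $\delta^0(R)\ge(1/k-o(1))M$. As each cluster has size $\gg\ell$, a routine greedy embedding along the regular pairs shows it suffices to find, inside some strongly connected subdigraph of $R$, a closed walk of length exactly $\ell$. If such a subdigraph has period $d'$, then by Bézout's identity applied to the lengths of its directed cycles through a fixed vertex it admits closed walks of all sufficiently large lengths divisible by $d'$; since $\ell$ is large and $d'\mid\ell$ whenever $d'\le k-1$, it is enough to produce a strong component of $R$ of period at most $k-1$ — equivalently, two directed cycles of $R$ through a common vertex whose lengths have greatest common divisor at most $k-1$.

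\textbf{Step 2 (forcing the extremal structure).} Suppose instead that every strong component of $R$ has period at least $k$. Let $T$ be a sink component of the condensation of $R$, so every vertex of $T$ has all of its $\ge\delta^+(R)\ge(1/k-o(1))M$ out-neighbours in $T$. Writing $d'\ge k$ for the period of $T$, its period classes $W_0,\dots,W_{d'-1}$ partition $T$ with all edges running $W_i\to W_{i+1}$, so $|W_{i+1}|\ge(1/k-o(1))M$ for every $i$; summing over $i$ forces $d'\le k$, hence $d'=k$. Thus $|W_i|=(1/k\pm o(1))M$ for all $i$, $|T|=(1-o(1))M$, and consecutive $W_i$ span almost complete bipartite graphs. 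Pulling this back to $G$ and placing every vertex into one of $k$ classes (the clusters of $W_i$ into $U_i$, the remaining $o(n)$ vertices arbitrarily) gives a partition $V(G)=U_0\cup\dots\cup U_{k-1}$ with $|U_i|=(1/k\pm o(1))n$ and with all but $o(n^2)$ edges running from some $U_i$ to $U_{i+1}$; call the other edges \emph{bad}, assign to a bad edge from $U_i$ to $U_j$ the \emph{shift} $(j-i)\bmod k\in\{0,2,3,\dots,k-1\}$, and note that, using $\delta^0(G)\ge\lfloor n/k\rfloor+1$, all but $o(n)$ vertices have $\ge(1/k-o(1))n$ out-neighbours in the next class and as many in-neighbours from the previous class.

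\textbf{Step 3 (the extremal case, and the main obstacle).} Now the minimum-semidegree condition is used on $G$ itself. As $\sum_i|U_i|=n$, some $U_{i_0}$ has $|U_{i_0}|\le\lfloor n/k\rfloor<\delta^+(G)$, so every vertex of $U_{i_0-1}$ sends out a bad edge; hence $G$ has $\Omega(n)$ bad edges, and for a suitable constant $\eta>0$ the set $S$ of shifts realised by at least $\eta n$ bad edges is nonempty. The aim is to build an $\ell$-cycle from a bounded number $t$ of vertex-disjoint bad edges $e_1,\dots,e_t$ with shifts $s_1,\dots,s_t$, joined cyclically by clean directed paths winding around the $\vec{C_k}$-structure through typical vertices: such a configuration has total length $\ell$ once the winding lengths are chosen to sum to $\ell-t$ (here $\ell\gg k$ supplies the room), and it closes into a cycle exactly when $\sum_{i=1}^{t}(s_i-1)\equiv-\ell\pmod k$. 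Since $k\nmid\ell$ the target is nonzero, so at least one genuine bad shift is needed. If $\{s-1:s\in S\}$ generates $\mathbb{Z}_k$, a bounded multiset of vertex-disjoint bad edges of shifts in $S$ realises the target and we are done. Otherwise $\{s-1:s\in S\}$ and $k$ share a prime factor $p$, so every clean edge and every abundant bad edge has shift $\equiv1\pmod p$; grouping the $U_i$ by $i\bmod p$ then exhibits $G$, up to a negligible set of vertices and edges, essentially as a blow-up of $\vec{C_p}$ of minimum semidegree still $(1/k-o(1))n$, and since $p<k$ we have $p\mid\ell$, so winding around this coarser structure $\ell/p$ times yields an $\ell$-cycle directly. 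I expect Step~3 to be the real difficulty: the danger is that the $\ge\eta n$ bad edges of some shift all meet a bounded vertex set (a union of stars), which would admit only boundedly many of them into a cycle and so limit the attainable shifts; circumventing this — by extracting a long enough vertex-disjoint family of bad edges, by switching to another abundant shift, or by recognising the coarser $\vec{C_p}$-blow-up — and handling the exact length bookkeeping are where the hypotheses $k\ge7$ and ``$\ell$ large compared with $k$'' are used. Steps~1 and~2 are routine once the greedy-embedding reduction of Step~1 is set up.
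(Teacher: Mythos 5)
First, a framing point: the paper does \emph{not} prove the conjecture as stated. It proves only the asymptotic version (Theorem~\ref{thm:cycles}), with $\delta^0(G)\ge(1+\eta)n/k$ in place of $\lfloor n/k\rfloor+1$, and only for $k\ge 7$ and $\ell\ge 10^7k^6$. Your plan is aimed at the exact bound (within the same restricted regime), so it is more ambitious than what the paper achieves. The route is also genuinely different. After the common reduction to closed walks in the reduced digraph (your Step~1 is essentially the paper's Lemma~\ref{lem:KKO}), the paper never sets up a stability/extremal dichotomy. Instead: if the girth of the reduced oriented graph $H$ is at least $k$, Shen's girth theorem forces every ordered pair of vertices to be joined by a directed path of length $\le 64k$ (Lemma~\ref{lem-diam}); a closed (not necessarily directed) walk with $a$ forward and $b$ backward edges, $a\ne b$, $a+b<k$, is then converted into two closed \emph{directed} walks $W_1,W_2$ with $2l(W_1)-l(W_2)=a-b$, so $\gcd(2l(W_1),l(W_2))<k$ divides $\ell$, and B\'ezout produces a closed directed walk of length exactly $\ell$ (Lemma~\ref{lem:or-cycles}); finally the Andr\'asfai--Erd\H{o}s--S\'os theorem shows that if no short odd non-directed cycle exists then $H$ is bipartite, and a neighbourhood-expansion counting argument (Lemma~\ref{lem:bip-case}) supplies the required short closed walk. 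Your Step~2 (period of the sink strong component of $R$ is either $<k$, which is easy, or exactly $k$, forcing a near-$\vec{C}_k$-blow-up) is a standard and correct replacement for the paper's first two steps, but it channels all of the difficulty into an extremal case that the paper's method sidesteps entirely.

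The gap is in Step~3, and you have in fact named it yourself. The argument needs a bounded family of \emph{vertex-disjoint} bad edges whose shifts $s_i$ satisfy $\sum(s_i-1)\equiv-\ell\pmod k$, but $\Omega(n)$ bad edges of a given shift can all be incident to a bounded set of vertices, in which case no such family exists for that shift. You list three possible escapes (extract a large matching anyway, switch to another abundant shift, or pass to the coarser $\vec{C}_p$-blow-up), but you do not show that in every configuration at least one of them is available, nor do you carry out the winding-length bookkeeping to guarantee the clean paths can be chosen of the exact required lengths and pairwise disjoint from each other and from the bad edges. As written, Step~3 is a plan with the central obstruction flagged but unresolved, so the proof does not close; by contrast, the paper's small-diameter/B\'ezout mechanism (Lemmas~\ref{lem-diam} and~\ref{lem:or-cycles}) is precisely what lets it avoid ever analysing the extremal structure of $G$.
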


Note that $k$ in Conjecture~\ref{conj:KDD} must be of the form $p^s$ for some prime $p$ and some $s\in \mathbb{N}$.
If true Conjecture~\ref{conj:KDD} is tight as a blow-up
of a $k$-cycle has minimum semidegree $\lfloor\frac nk\rfloor$
and there is no $\ell$-cycle (since~$k$ does not divide $\ell$). Proposition~\ref{pro:counter} shows that the condition on $n$ being sufficiently large is necessary.

In~\cite{cite:KKO} Kelly, K\"uhn and Osthus proved Conjecture~\ref{conj:KDD} exactly for
$k=3$ and proved it asymptotically for $k=4$ and $\ell \ge 42$ as well as $k=5$ and $\ell \ge 2550$. 
They also showed that a bound of $\lfloor n/3 \rfloor +1$ suffices for any $\ell \ge 4$.
In
this paper we prove Conjecture~\ref{conj:KDD} asymptotically
for the case when $k\ge 7$ and $\ell$ is sufficiently large compared to $k$.

\begin{thm}\label{thm:cycles}Let $k\ge 7$ and
$\ell\ge 10^7 k^6$. Suppose that~$k$ is the
minimal integer greater than~$2$ that does not divide~$\ell$. Then for
all~$\eta>0$ there exists an integer $n_1=n_1(\eta, \ell)$ such that every
oriented graph~$G$ on $n\ge n_1$ vertices with minimum
semidegree $\delta^0(G)\ge(1+\eta) n/k$ contains an $\ell$-cycle.
\end{thm}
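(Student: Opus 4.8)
\section*{Proof proposal for Theorem~\ref{thm:cycles}}

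\emph{Strategy.} The plan is to run the usual regularity/blow-up scheme, reducing the
problem to a question about a \emph{bounded} auxiliary digraph. First apply the
diregularity lemma to~$G$ to obtain a reduced oriented graph~$R$ with
$\delta^0(R)\ge(1+\eta/2)|R|/k$ whose clusters have size $m=\Theta(n)$, and
perform the standard clean-up so that the pairs we shall use are
$\varepsilon$-regular of density at least some constant~$d$. I claim it then
suffices to find a strongly connected subdigraph $H\subseteq R$ with
$|H|\le c_0k^3$ (for an absolute constant~$c_0$) whose index of imprimitivity
(period) $g(H)$ divides $\mathrm{lcm}(1,\dots,k-1)$; since~$k$ is the minimal
integer greater than~$2$ not dividing~$\ell$, every integer in $\{1,\dots,k-1\}$
divides~$\ell$, so in particular $g(H)\mid\ell$. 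Given such an~$H$, note that
$\ell\ge 10^7k^6\ge C|H|^2$: for a strongly connected digraph on~$h$ vertices with
period~$g$, every multiple of~$g$ that is at least $h^2$ is the length of a closed
walk through a fixed base vertex (combine one fixed cycle through the base vertex
with short detours that wind around cycles whose lengths generate~$g$, and apply a
Chicken\,McNugget/Frobenius bound). Hence~$H$ contains a closed walk of length
\emph{exactly}~$\ell$; making the pairs of~$H$ super-regular and applying the usual
embedding lemma for closed walks in super-regular reduced digraphs, this walk lifts
to an $\ell$-cycle in~$G$ — each cluster of~$H$ is reused at most~$\ell\ll m$ times,
so regularity always supplies the required choices. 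This is exactly where the
hypothesis $\ell\ge 10^7k^6$ enters.

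\emph{The easy case.} If~$R$ contains a cycle~$C$ of length at most $k-1$, take
$H:=C$: then $g(H)=|C|\le k-1$ divides $\mathrm{lcm}(1,\dots,k-1)$ and
$|H|\le k-1\le c_0k^3$, so we are done. We may therefore assume $g(R)\ge k$. By a
Chv\'atal--Szemer\'edi-type bound for oriented graphs with large minimum outdegree
(applicable since $\delta^0(R)\ge(1+\eta/2)|R|/k$ and $|R|$ is large), the girth of~$R$
is at most $k+O(1)\le 2k$ for $k\ge 7$; fix a shortest cycle~$C_0$, so
$k\le|C_0|\le 2k$, and a vertex $v\in V(C_0)$.

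\emph{The hard case and main obstacle.} It remains to produce, under the assumption
$g(R)\ge k$, a second cycle sharing a vertex with~$C_0$ whose length differs from
$|C_0|$ by at most $k-1$: indeed, two distinct cycles through a common vertex with
lengths $a\ne b$ and $|a-b|\le k-1$ span a strongly connected subdigraph of size at
most $a+b\le 4k\le c_0k^3$ whose period divides $\gcd(a,b)$, which divides $a-b$ and
is hence at most $k-1$. The plan is a stability dichotomy: either such a pair of
cycles exists, or~$R$ is $\gamma$-close (for small $\gamma=\gamma(\eta)$) to a blow-up
of~$C_{g(R)}$. But a blow-up of a cycle of length at least~$k$ cannot have all its
$g(R)$ parts of size at least $(1+\eta/2)|R|/k$, so in the near-blow-up case the
excess degree forces a ``chord'' edge $V_i\to V_j$ with $j\not\equiv i+1\pmod{g(R)}$;
combined with the forward edges (which are dense, hence present) this yields a cycle
of length in $\{3,\dots,g(R)-1\}\subseteq\{3,\dots,2k-1\}$, and — after at most one
further iteration — either a cycle of length at most $k-1$ or two cycles with lengths
differing by at most $k-1$. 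Making this step quantitatively watertight — pinning down
the girth, the closeness parameter~$\gamma$, and the positions of the chords so that
the extracted cycle lengths are simultaneously $O(k)$ and have $\gcd$ at most $k-1$ —
is the main obstacle, and is where $k\ge 7$ and $\ell$ large compared to~$k$ are
genuinely used; the small cases $k\le 6$ are handled separately in~\cite{cite:KKO}.

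In summary: regularity reduces everything to finding a bounded-size strongly
connected $H\subseteq R$ of small period; a short cycle (length $\le k-1$) does the
job immediately, and otherwise a stability argument converts the slack $\eta|R|/k$
above the blow-up threshold into such a short cycle (or a close pair of cycles);
finally $\ell\ge 10^7k^6$ is large enough, relative to $|H|=O(k^3)$, to realise the
target length~$\ell$ as a closed walk in~$H$ and hence as an $\ell$-cycle in~$G$.
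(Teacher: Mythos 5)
Your high-level scheme matches the paper's: reduce via regularity to a small auxiliary oriented digraph with high minimum semidegree, find there a bounded structure whose cycle-length gcd is less than $k$ (hence divides $\ell$), and wind around to manufacture a closed directed walk of length exactly $\ell$, which regularity lifts to an $\ell$-cycle. Your ``easy case'' (girth $<k$) is exactly Proposition~\ref{prop:s-cycle}, and your Frobenius/Chicken-McNugget step plays the same role as the paper's B\'ezout argument in Lemma~\ref{lem:or-cycles}.

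The genuine gap is in your ``hard case'' (girth $\ge k$), and you acknowledge it yourself. Your proposed stability dichotomy --- either two cycles through a common vertex with lengths differing by at most $k-1$ exist, or $R$ is $\gamma$-close to a blow-up of $C_{g(R)}$ --- is asserted, not proved, and the follow-up (``a chord edge $V_i\to V_j$ \dots after at most one further iteration \dots'') is a sketch whose quantitative bookkeeping you explicitly flag as ``the main obstacle.'' This is not a routine step: one would need to show that a near-blow-up chord actually produces two cycle lengths of size $O(k)$ with gcd below $k$, and it is far from clear that a single chord or two iterations suffice. In other words, the core of the theorem is exactly what is missing from your write-up.

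The paper avoids the stability machinery entirely, and this is the real content of the argument. Having assumed girth $\ge k$, it uses Shen's girth bound (Theorem~\ref{thm:Shen}, Corollary~\ref{cor-shen}) to establish a short-diameter property (Lemma~\ref{lem-diam}), then observes via Corollary~\ref{cor:or-cycle} that the absence of short directed cycles actually forces the absence of short \emph{odd} cycles in the underlying undirected graph. The Andr\'asfai--Erd\H{o}s--S\'os theorem then makes the underlying graph bipartite; since $k$ must be a prime power $\ge 4$, it is odd or divisible by $4$, and a clean BFS-layering count in the bipartite oriented graph (Lemma~\ref{lem:bip-case}) produces a closed walk of length $<k$ with an unequal number of forward and backward edges. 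Lemma~\ref{lem:or-cycles} converts this, via the diameter bound and B\'ezout, into a closed directed walk of length exactly $\ell$. If you wish to pursue your stability route you would need to prove that dichotomy from scratch; otherwise the AES-plus-bipartite-BFS argument is the missing ingredient your proposal should use.
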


Instead of an $\ell$-cycle, one may want to find a
cycle of length~$\ell$ with some given orientation of its edges. In this case the semidegree condition
seems to depend on the 
so-called cycle-type. Given an arbitrary orientation of a cycle~$C$, the
\emph{cycle-type}~$t(C)$ of~$C$ is the absolute value of the number of edges
with clockwise orientation minus the number of edges with anticlockwise
orientation. So an~$\ell$-cycle has cycle-type~$\ell$ and oriented cycles with cycle-type~$0$
are precisely those for which there is digraph homomorphism
into a directed path. Moreover if $t(C)\ge 3$ then~$t(C)$ is
the maximum length of a directed cycle into which there is a
digraph homomorphism of~$C$. The authors of~\cite{cite:KKO}
made the following conjecture.

\begin{conj}[Kelly, K\"uhn, Osthus]\label{conj:cycle-type}
Let~$C$ be an arbitrarily oriented cycle of length $\ell\ge 4$
and cycle-type $t(C)\ge 4$. Let $k\ge 3$ be minimal such
that~$k$ does not divide~$t(C)$. Then there exists an
integer $n_0=n_0(\ell)$ such that every oriented graph~$G$
on $n\ge n_0$ vertices with minimum semidegree
$\delta^0(G)\ge \lfloor n/k\rfloor+1$ contains~$C$.
\end{conj}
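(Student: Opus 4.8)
The plan is to prove Conjecture~\ref{conj:cycle-type} in the asymptotic form matching Theorem~\ref{thm:cycles}: for $k\ge 7$ and $\ell\ge 10^7k^6$, with $k$ minimal not dividing $t:=t(C)$, to show that for every $\eta>0$ there is $n_1=n_1(\eta,\ell)$ so that $\delta^0(G)\ge(1+\eta)n/k$ forces a copy of~$C$. A first reduction records how restrictive the hypothesis is: since $1,2,\dots,k-1$ all divide~$t$, we have $\operatorname{lcm}(1,\dots,k-1)\mid t$, so $60\mid t$, $t\ge k$, and $t\le\ell$ with $\ell\equiv t\pmod 2$. Decompose~$C$ into its maximal directed subpaths $Q_1,\dots,Q_{2r}$, alternately ``forward'' and ``backward'' as one traverses~$C$, with lengths $a_1,b_1,\dots,a_r,b_r$ satisfying $\sum_i(a_i+b_i)=\ell$ and $|\sum_i a_i-\sum_i b_i|=t$; the ``sources'' $u_1,\dots,u_r$ and ``sinks'' $v_1,\dots,v_r$ of~$C$ alternate around it, and embedding~$C$ amounts to choosing their images in~$G$ and joining them by internally disjoint directed paths of the prescribed lengths $a_i$ (from~$u_i$ to~$v_i$) and $b_i$ (from~$u_{i+1}$ to~$v_i$). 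When $\ell=t$ the cycle~$C$ is already directed and Theorem~\ref{thm:cycles} applies verbatim, so we may assume $\ell>t$.

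The idea is to separate a ``global'' problem from a ``local'' one. Globally, I would run the argument behind Theorem~\ref{thm:cycles}: apply the directed regularity lemma to get a reduced digraph~$R$ with $\delta^0(R)\ge(1+\eta/2)|R|/k$, and then split into cases. Either $R$ (hence~$G$) contains a robust ``shortcutting'' configuration --- for instance short directed cycles through a common cluster whose lengths have small gcd, or, in stability language, an edge between clusters whose cyclic distance along the $k$-cycle backbone is not~$1$ --- in which case the usual blow-up and absorption arguments produce, through any prescribed bounded collection of clusters and in the prescribed cyclic order, closed directed walks of every sufficiently large length in every residue class modulo that gcd; or else~$R$ lies within $o(|R|)$ of a blow-up of the directed $k$-cycle, and one argues from near-extremality: the $\eta n/k$ surplus in/out-degree at each vertex of~$G$ cannot be compatible with the blow-up structure, and any offending edge creates a short directed cycle of length not divisible by~$k$, which again unlocks length flexibility. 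Locally, the~$2r$ directed paths are inserted greedily: every source, sink, and short ``backward detour'' $Q_{2i}$ only needs in/out-neighbourhoods of size $\Theta(n)$, and since $\ell$ (hence $r$ and all $a_i,b_i$) is fixed while $n\to\infty$, disjointness is trivially maintained, the long forward runs being absorbed into the flexible closed-walk structure from the global step. Assembling the pieces in the cyclic order dictated by~$C$ yields a copy of~$C$.

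The main obstacle is the interaction between the length/divisibility bookkeeping and the placement of the turns --- exactly what is absent in the directed-cycle case of Theorem~\ref{thm:cycles}. One must show that the flexible closed directed walk realising the ``skeleton'' of~$C$ can be made to have length \emph{exactly}~$\ell$ (note that $\ell-t$ is an arbitrary even number and, unlike~$t$, carries no divisibility information) while simultaneously threading through all~$r$ source/sink gadgets in the right order and leaving room for the backward detours; this is where the slack afforded by $\ell\ge 10^7k^6$, which dwarfs~$r$ and the total detour length, is spent. The near-extremal case is the most delicate point, since there the exact length must be engineered out of the surplus edges, whose usable ``shortcut'' lengths are themselves constrained by the near-blow-up structure and may have to be combined in several stages. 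I expect the remaining ingredients --- the regularity reduction, the stability analysis, and the greedy insertion of detours --- to go through by routine adaptation of the machinery developed for Theorem~\ref{thm:cycles}.
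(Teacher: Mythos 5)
Your proposal does not close the problem; it is an outline whose hardest steps are left open, and it misses the one observation that makes the paper's argument short. The key fact (recorded in the introduction of the paper) is that an oriented cycle $C$ with cycle-type $t=t(C)\ge 3$ admits a digraph homomorphism into the directed $t$-cycle: following $C$ around and taking partial sums of the edge orientations modulo $t$ maps each vertex of $C$ to a vertex of $\vec C_t$ so that every edge of $C$ is sent to an edge of $\vec C_t$ with the correct orientation. Consequently, once the reduced oriented graph $H$ obtained from the directed regularity lemma (after randomly resolving double edges, as in Lemma~3.2 of~\cite{cite:KKO2}) is known to contain a directed $t$-cycle --- which is exactly what Theorem~\ref{thm:cycles} applied as a black box to $H$ provides --- the standard embedding lemma for regular pairs places the whole of $C$ inside the corresponding blown-up $t$-cycle in $G$, since $\ell$ is fixed while the clusters have size $\Theta(n)$. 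There is no need to decompose $C$ into maximal directed runs, to place sources and sinks separately, or to do any length bookkeeping inside $G$: all the divisibility arithmetic is already encapsulated in finding the $t$-cycle in $H$.

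By contrast, the route you sketch re-derives the global structure from scratch via a stability dichotomy (``robust shortcutting configuration'' versus ``near blow-up of $\vec C_k$'') that does not appear in, and is not needed for, the paper's argument; the proof of Theorem~\ref{thm:cycles} proceeds instead through Shen's girth bound, the Andr\'asfai--Erd\H{o}s--S\'os theorem, and a B\'ezout/gcd argument on closed walks. More importantly, the two steps you yourself identify as the crux --- engineering a closed directed walk of length \emph{exactly} $\ell$ through the prescribed turn gadgets, and the near-extremal case where the usable shortcut lengths are constrained by the blow-up structure --- are precisely the steps you do not carry out, and ``routine adaptation'' is not a justification for them (note in particular that $\ell-t$ carries no divisibility information, as you observe, so the walk-length flexibility you need is not delivered by the machinery of Lemma~\ref{lem:or-cycles}, which controls lengths only modulo a divisor of $t$). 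Two smaller points: the hypothesis of the approximate statement should be $t\ge 10^7k^6$ rather than $\ell\ge 10^7k^6$, and restricting to $k\ge 7$ omits the cases $k=3,4,5$ of Theorem~\ref{thm:cycle-type}, which are handled by combining the same homomorphism argument with the exact and asymptotic results of~\cite{cite:KKO}.
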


Again the blow-up of a $k$-cycle shows that
Conjecture~\ref{conj:cycle-type} is tight.
As observed in~\cite{cite:KKO}, Conjecture~\ref{conj:KDD}
would imply an approximate version of
Conjecture~\ref{conj:cycle-type}. The same argument shows that
Theorem~\ref{thm:cycles} (together with the cases $k=3,4,5$ of
Conjecture~\ref{conj:KDD} proved in~\cite{cite:KKO}) implies
an approximate version of Conjecture~\ref{conj:cycle-type} for
the case when the
cycle-type~$t(C)$ is sufficiently large with respect to~$k$.
More precisely Theorem~\ref{thm:cycles} implies the following
statement.

\begin{thm}\label{thm:cycle-type} Let $k\ge 3$ and
$t\ge 10^7 k^6$. Suppose that~$k$ is the
minimal integer greater than~$2$ that does not
divide~$t$. Then for all $\eta>0$ and all~$\ell$ there exists an integer $n_1=n_1(\eta, \ell)$ such
that every oriented graph~$G$ on $n\ge n_1$ vertices with minimum
semidegree $\delta^0(G)\ge (1+\eta)n/k$ contains any oriented
cycle~$C$ of length~$\ell$ and cycle-type $t(C)=t$.
\end{thm}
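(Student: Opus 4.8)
We prove Theorem~\ref{thm:cycle-type} by deducing it from Theorem~\ref{thm:cycles} (and, when $k\le 5$, from the cases $k=3,4,5$ of Conjecture~\ref{conj:KDD} established in~\cite{cite:KKO}) via the regularity method, as indicated in the paragraph preceding the statement. The key structural fact --- already recorded above --- is that since $t=t(C)\ge 3$ there is a digraph homomorphism $h\colon C\to\vec{C_t}$ of~$C$ into the directed cycle of length~$t$. It therefore suffices to locate inside~$G$ a \emph{blown-up directed $t$-cycle}: pairwise disjoint sets $Z_0,\dots,Z_{t-1}\subseteq V(G)$, each of size vastly larger than~$\ell$, such that for every~$j$ (indices modulo~$t$) the ordered pair $(Z_j,Z_{j+1})$ is $\varepsilon$-regular with density at least~$d$ for the arcs directed from~$Z_j$ to~$Z_{j+1}$, for suitable constants $0<\varepsilon\ll d\ll\eta$. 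From such a structure a copy of~$C$ is produced by traversing~$C$ once (in its cyclic order) and embedding each vertex~$v$ into~$Z_{h(v)}$: at each of the~$\ell$ steps, $\varepsilon$-regularity leaves at least $(d-\varepsilon)|Z_{h(v)}|-\ell>0$ choices for the image of~$v$ inside the appropriate out- or in-neighbourhood of the image of the previously embedded vertex, and the cycle is closed up by the standard trick of propagating a large candidate set from the image of the first vertex towards the last one. It is essential that we seek a $t$-cycle and not an $\ell$-cycle here: a blow-up of $\vec{C_\ell}$ contains~$C$ only if $\ell=t$, whereas a blow-up of $\vec{C_t}$ always does --- precisely because $t(C)$ is the maximum length of a directed cycle receiving a homomorphism from~$C$.

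To produce the blown-up $t$-cycle, apply the directed version of Szemer\'edi's Regularity Lemma to~$G$ with regularity parameter~$\varepsilon$ and a prescribed lower bound~$M_0$ on the number of clusters. This yields a partition of (almost all of)~$V(G)$ into~$M$ clusters with $M_0\le M\le M_1(\varepsilon,M_0)$, each of size $\Theta(n/M)$, together with an associated reduced oriented graph~$R$ on the clusters for which a routine calculation gives $\delta^0(R)\ge(1+\eta/2)|R|/k$ once~$\varepsilon$ and~$d$ are chosen small enough in terms of~$\eta$. Now set $M_0:=n_1(\eta/2,t)$ if $k\ge 7$ --- so that Theorem~\ref{thm:cycles} applies to~$R$ with parameters $(k,t,\eta/2)$, which is legitimate since $t\ge 10^7k^6$ and~$k$ is the minimal integer greater than~$2$ not dividing~$t$ --- or set~$M_0$ to the corresponding threshold coming from~\cite{cite:KKO} if $k\in\{3,4,5\}$; recall that these are the only two cases, as~$k$ must be a prime power, and that for $k=4,5$ the bound $t\ge 10^7k^6$ comfortably exceeds the thresholds $42$ and $2550$, while for $k=3$ one uses the exact result of~\cite{cite:KKO}. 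In either case~$R$ contains a directed $t$-cycle $z_1z_2\cdots z_tz_1$, and the clusters $V_{z_1},\dots,V_{z_t}$ together with the $\varepsilon$-regular pairs furnished by the arcs $z_iz_{i+1}$ of~$R$ constitute the required blown-up $t$-cycle, provided the common cluster size $\Theta(n/M)$ --- with~$M$ bounded solely in terms of~$\eta$ and~$t$ --- is much larger than~$\ell$. This is exactly where the hypothesis $n\ge n_1(\eta,\ell)$ enters.

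I do not expect a genuine obstacle: essentially all of the content lies in Theorem~\ref{thm:cycles}, and what remains is the standard ``regularity $+$ reduced-graph statement $+$ embedding'' transfer. The two steps needing care are (i) the bookkeeping that turns the semidegree bound $(1+\eta)n/k$ for~$G$ into $(1+\eta/2)|R|/k$ for~$R$ (so that~$R$ again satisfies the hypothesis, with the slack not fully consumed by the cleaning and the passage to an oriented reduced graph), and (ii) choosing the target length in the reduced-graph statement to be~$t$ rather than~$\ell$ --- possible because $t\ge 10^7k^6$ --- which is also why only~$t$, and not~$\ell$, is constrained in the conclusion. The closing of the embedded cycle~$C$ inside the regular blow-up is entirely routine, being a special case of the Blow-up Lemma (or an elementary two-sided greedy argument, since $\ell$ is negligible compared with the cluster sizes).
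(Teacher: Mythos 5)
Your proposal is correct and follows essentially the same route as the paper's own outline: apply the directed regularity lemma to obtain an oriented reduced graph inheriting the semidegree ratio, invoke Theorem~\ref{thm:cycles} (and, for $k\in\{3,4,5\}$, the earlier results of~\cite{cite:KKO}) to find a directed $t$-cycle in it, and then embed~$C$ into the corresponding blown-up $t$-cycle via the homomorphism $C\to\vec{C_t}$ that exists since $t(C)\ge 3$. The only cosmetic difference is that the paper explicitly splits off the step of passing from the directed cluster graph~$H'$ to an oriented spanning subgraph~$H$ (quoting Lemma~3.2 of~\cite{cite:KKO2}, which costs a further factor of the slack), whereas you fold this into the bookkeeping, and the paper cites the embedding lemma (Lemma~7.5.2 in~\cite{cite:Diest}) where you mention the Blow-up Lemma or a greedy argument; neither changes the substance.
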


Note that if there exists no cycle of length $\ell$ with cycle-type~$t$ (for example in the case when $\ell<t$), then
Theorem~\ref{thm:cycle-type} is vacuously true.

For completeness we give an outline of the proof of
Theorem~\ref{thm:cycle-type}. Let~$G$ be as in
Theorem~\ref{thm:cycle-type}. Apply a version of Szemer\'edi's
regularity lemma for directed graphs to~$G$ (such a directed
version was proved by Alon and Shapira~\cite{cite:AS}) to
obtain a directed cluster graph~$H'$ with similar minimum semidegree as
in~$G$, i.e.\ $\delta^0(H')\ge (1+\eta/2)|V(H')|/k$.
However~$H'$ needs not to be oriented, but for every double
edge of~$H'$ one can select one of the two edges randomly
(with suitable probability) in order to obtain an oriented
spanning subgraph $H$ of $H'$ which still satisfies
$\delta_0(H)\ge (1+\eta /4)|V(H)|/k$ (see Lemma~3.2
in~\cite{cite:KKO2} for a proof). The oriented graph~$H$
satisfies the conditions of Theorem~\ref{thm:cycles} (with
$\eta/4$ playing the role of $\eta$) and thus contains a $t$-cycle. Now one can 
apply an oriented version of the embedding lemma to find a copy of~$C$ within the subgraph
of~$G$ which corresponds to the $t$-cycle in~$H$. (For the embedding
lemma we refer the reader for example to Lemma 7.5.2.
in~\cite{cite:Diest}.)

We conclude the introduction by raising several open
problems. The first and perhaps most interesting one is whether we can replace the semidegree
condition by an outdegree condition in
Theorems~\ref{thm:cycles} and~\ref{thm:cycle-type}. The
second question is to find the exact
bound for the semidegree condition in more cases. The smallest
open case is for~$\ell=6$, as the case when $\ell\ge 4$ with
$\ell\not \equiv 0 \mod 3$ is solved in~\cite{cite:KKO}. 
Finally, can we prove these results without the use of the
regularity lemma? The proofs of Theorems~\ref{thm:cycles}
and~\ref{thm:cycle-type} rely on a version of the regularity
lemma (in the proof of Theorem~\ref{thm:cycles} the
regularity lemma comes in because it was used
in~\cite{cite:KKO} to prove Lemma~\ref{lem:KKO}). Therefore
the bound on $n_1$ is huge. Proving these theorems without the
help of the regularity lemma would significantly reduce the
bound on~$n_1$. Related problems can be found in the survey~\cite{KOsurvey}.

\section{Proof of Theorem~\ref{thm:cycles}}

Throughout this section the numbers~$k$ and~$\ell$ are
fixed and satisfy the assumptions of Theorem~\ref{thm:cycles}, i.e.

\textno
$k\ge 7$, $\ell \ge 10^7 k^6$ and $k$ is the minimal integer greater than~2 that does not divide $\ell$. &(*)

By the following lemma from~\cite{cite:KKO} it suffices to
find a closed directed walk of length~$\ell$ instead of an $\ell$-cycle. 

\begin{lem}[Kelly, K\"uhn, Osthus]\label{lem:KKO}
Let $\ell \ge 3$ be an integer and~$c>0$. Suppose that there
exists an integer~$n_0$ such that every oriented graph~$H$ on
$n> n_0$ vertices with $\delta^0(H)\ge cn$ contains a  closed
directed walk of length~$\ell$. Then for each
$\varepsilon>0$ there exists $n_1=n_1(\varepsilon, \ell, n_0)$ such that if~$G$ is
an oriented graph on $n\ge n_1$ vertices with $\delta^0(G)\ge
(c+\varepsilon)n$ then~$G$ contains an $\ell$-cycle.
\end{lem}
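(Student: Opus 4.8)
The plan is to deduce this from the directed version of Szemer\'edi's regularity lemma, in two moves: reduce the search for an $\ell$-cycle in $G$ to finding a \emph{closed directed walk} of length $\ell$ in a bounded-size oriented graph (to which the hypothesis applies), and then lift such a walk back to a genuine $\ell$-cycle of $G$ using regularity. Given $\varepsilon>0$, I would fix constants $0<\varepsilon'\ll d\ll\varepsilon$ and an integer $M_0>2n_0$, and then let $n_1$ be large compared with $\ell$ and with the bound $M_1=M_1(M_0,\varepsilon')$ that the regularity lemma supplies for these parameters. Let $G$ be an oriented graph on $n\ge n_1$ vertices with $\delta^0(G)\ge(c+\varepsilon)n$.

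First I would apply the directed regularity lemma of Alon and Shapira~\cite{cite:AS} to $G$ with parameters $\varepsilon'$ and $M_0$, getting a partition of $V(G)$ into an exceptional set of size at most $\varepsilon'n$ and clusters $V_1,\dots,V_M$ of common size $m$, with $M_0\le M\le M_1$. Let $R$ be the reduced digraph on $\{V_1,\dots,V_M\}$, with an edge from $V_i$ to $V_j$ exactly when the pair $(V_i,V_j)$ is $\varepsilon'$-regular in the direction $V_i\to V_j$ and has density at least $d$; a standard degree count (after discarding the few clusters lying in many irregular pairs, which still leaves more than $n_0$ clusters since $M_0>2n_0$) gives $\delta^0(R)\ge(c+\varepsilon/2)M$. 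The digraph $R$ may contain digons, so I would next pass to an oriented spanning subgraph $H$ of $R$, retaining for each digon one of its two edges at random with a suitable probability; by Lemma~3.2 of~\cite{cite:KKO2} this can be done so that $\delta^0(H)\ge(c+\varepsilon/4)M\ge cM$, with every edge of $H$ still corresponding to an $\varepsilon'$-regular pair of $G$ of density at least $d$. Since $H$ is oriented, $|V(H)|=M>n_0$ and $\delta^0(H)\ge c\,|V(H)|$, the hypothesis of Lemma~\ref{lem:KKO} yields a closed directed walk $W=C_0C_1\cdots C_{\ell-1}C_0$ of length $\ell$ in $H$.

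It then remains to lift $W$ to an $\ell$-cycle $x_0x_1\cdots x_{\ell-1}x_0$ of $G$ with $x_i\in C_i$ for each $i$; since the clusters along $W$ may repeat, the $x_i$ must be chosen pairwise distinct, but this is easy greedily because $m\ge(1-\varepsilon')n/M_1$ is enormous compared with $\ell$. I would first pick $x_1\in C_1$ so that $D:=N^-_G(x_1)\cap C_0$ has size at least $(d-\varepsilon')m$ and $x_1$ has at least $(d-\varepsilon')m$ out-neighbours in $C_2$ (all but at most $2\varepsilon'm$ vertices of $C_1$ qualify). Then I would choose $x_2,\dots,x_{\ell-2}$ one at a time: at the $i$-th step $N^+_G(x_{i-1})\cap C_i$ has size at least $(d-\varepsilon')m$ (as $x_{i-1}$ was chosen with many out-neighbours in $C_i$), so one can take $x_i$ in this set having in addition at least $(d-\varepsilon')m$ out-neighbours in $C_{i+1}$ and distinct from the at most $\ell$ vertices chosen so far. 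Next, since $|D|\ge(d-\varepsilon')m>2\varepsilon'm$ and the pair $(C_{\ell-1},C_0)$ is $\varepsilon'$-regular, all but at most $2\varepsilon'm$ vertices of $C_{\ell-1}$ have at least $(d-3\varepsilon')|D|$ out-neighbours inside $D$; I would choose $x_{\ell-1}\in N^+_G(x_{\ell-2})\cap C_{\ell-1}$ with this property and distinct from the earlier vertices. Finally, $N^+_G(x_{\ell-1})\cap D=N^+_G(x_{\ell-1})\cap N^-_G(x_1)\cap C_0$ has size at least $(d-3\varepsilon')|D|>\ell$, so I would pick $x_0$ in this set distinct from all earlier vertices; then $x_0x_1\cdots x_{\ell-1}x_0$ is an $\ell$-cycle of $G$, as desired.

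The one genuinely delicate step, and what I expect to be the main obstacle, is closing the cycle: $x_0$ must lie in an out-neighbourhood and an in-neighbourhood simultaneously within the single cluster $C_0$, and for small density $d$ such an intersection can be empty, so one cannot just fix a ``typical'' $x_0$ at the outset and fill in the rest greedily. The device above sidesteps this by committing to $x_1$ --- and hence to a large target set $D\subseteq C_0$ --- before the greedy process begins, and only pinning down $x_0$ inside $D$ at the very end, via one further use of regularity of $(C_{\ell-1},C_0)$. Everything else is bookkeeping: the constants have to be chosen in the hierarchy $\varepsilon\to d\to\varepsilon'\to M_0\to n_1$ so that the degree bound for $R$ holds and each greedy step has room to spare, which is exactly why $n_1$ may be taken to depend only on $\varepsilon$, $\ell$ and $n_0$.
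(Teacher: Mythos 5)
Your proposal is correct and takes essentially the same route the paper sketches: apply the Alon--Shapira directed regularity lemma, pass from the reduced digraph to an oriented spanning subgraph via Lemma~3.2 of~\cite{cite:KKO2}, invoke the hypothesis to get a closed directed walk of length $\ell$ in the cluster graph, and then lift it to an $\ell$-cycle in $G$ by a standard greedy embedding along regular pairs. The only difference is that you spell out the closing-the-cycle step (fixing the target set $D\subseteq C_0$ via $x_1$ before the greedy pass) which the paper leaves as ``can then easily be converted,'' so the substance is the same.
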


The proof of Lemma~\ref{lem:KKO} is similar to the argument
showing that Theorem~\ref{thm:cycles} implies
Theorem~\ref{thm:cycle-type}. As there, one first applies
the regularity lemma for directed graphs to~$G$ to obtain a
directed cluster graph~$H'$. The next step is then to find
an oriented cluster 
graph~$H$. As before $\delta^0(H)\ge c|V(H)|$
and so~$H$ contains a closed directed walk of
length~$\ell$, which can then easily be converted to an
$\ell$-cycle in~$G$.

\begin{prop}\label{prop:s-cycle}
Let $k$ and $\ell$ be as in $(*)$.
If~$H$ is an oriented graph of girth less than $k$, then it
contains a closed directed walk of length~$\ell$. 
\end{prop}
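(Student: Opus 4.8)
The plan is to exploit the divisibility structure hidden in hypothesis~$(*)$. Since $H$ is an \emph{oriented} graph it has no loops and no digons, so its girth is at least~$3$; combined with the assumption $g(H)<k$ this yields a directed cycle $C$ in $H$ of some length $g$ with $3\le g\le k-1$. Fix any vertex $v$ of $C$ as a basepoint.

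The key observation is that $g$ divides $\ell$. Indeed, by $(*)$ the integer $k$ is the \emph{minimal} integer greater than~$2$ that fails to divide $\ell$, hence every integer $m$ with $2<m<k$ divides $\ell$; as $3\le g<k$, this applies to $m=g$. Writing $\ell=gq$ with $q=\ell/g\in\mathbb{N}$, I would then traverse the cycle $C$ starting and ending at $v$ exactly $q$ times. The resulting walk is closed and directed, and its length is $gq=\ell$, which is precisely the required closed directed walk.

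There is essentially no obstacle in this proposition: once the girth is known to lie strictly below $k$, minimality of $k$ forces the length of a shortest cycle to be a divisor of $\ell$, and repeating that cycle the appropriate (positive, since $q\ge 1$) number of times finishes the argument. The only points needing a word of care are that "oriented graph" rules out cycles of length~$2$, so that $g\ge 3$ and the divisibility statement genuinely applies, and that $q\ge1$ so the walk is non-trivial; both are immediate. One should view this as the easy extreme case, with the substantive work in the proof of Theorem~\ref{thm:cycles} being to produce a closed directed walk of length $\ell$ in oriented graphs of girth \emph{at least} $k$ under the semidegree hypothesis.
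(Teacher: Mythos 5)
Your argument is correct and is essentially the same as the paper's: take a shortest cycle of length $s<k$, note that $s\ge 3$ since $H$ is oriented, invoke minimality of $k$ in $(*)$ to conclude $s\mid\ell$, and wind $\ell/s$ times around the cycle. The paper's proof is a three-line version of exactly this reasoning.
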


\begin{proof}
Let $s<k$ be such that $H$ contains an $s$-cycle~$C$.
Then $(*)$ implies that~$s$ divides~$\ell$.
Thus winding~$\ell/s$ times around the cycle~$C$ gives a
closed directed walk of length~$\ell$.
\end{proof}

By Lemma~\ref{lem:KKO} and Proposition~\ref{prop:s-cycle} we may
assume that our oriented graph~$H$ has girth at least~$k$. Our next aim is to
show that in this case~$H$ contains a directed path of length
at most~$64k$ between any two vertices in~$H$. For this we
use the following result by Shen~\cite{cite:shen}.

\begin{thm}[Shen]\label{thm:Shen}
An oriented graph~$H$ on~$n$ vertices with minimum outdegree
$\delta^+(H)\ge d$ has girth  $$g(H)\leq 3\left\lceil\left(\ln
\frac{2+\sqrt{7}}{3}\right)\frac{n}{d}\right\rceil.$$  
\end{thm}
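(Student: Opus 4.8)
The plan is to assume the conclusion fails and to derive a contradiction. Write $c:=\ln\frac{2+\sqrt7}{3}$ and $m:=\left\lceil cn/d\right\rceil$, and suppose $H$ has girth $g\ge 3m+1$. First I would reduce to the strongly connected case: the condensation of $H$ has a sink, that is, a strong component $C$ with no edge leaving it, and then every vertex of $C$ retains all of its (at least $d$) out-neighbours inside $C$, so $\delta^+(H[C])\ge d$; since every directed cycle of $H$ lies inside a single strong component we have $g(H)\le g(H[C])$, and as $|V(C)|\le n$ and the asserted bound is monotone in the number of vertices it suffices to bound $g(H[C])$. So I may assume $H$ is strongly connected, and I will bound its girth by $3\lceil c|V(H)|/d\rceil$.

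Next I would record two consequences of $g\ge 3m+1$. The first is a \emph{growth estimate} for out-balls. For $v\in V(H)$ and $j\ge 0$ put $b_j(v):=|\{u:d(v,u)\le j\}|$ and let $\delta_j(v):=b_j(v)-b_{j-1}(v)$ (with $b_{-1}(v):=0$) be the size of the $j$-th distance layer from $v$. Whenever $b_j(v)<g$, the sub-oriented-graph induced on this out-ball spans at most $b_j(v)-1$ edges (any more would force a cycle of length at most $b_j(v)<g$), so at least $(d-1)b_j(v)+1$ edges leave it; each such edge goes from layer $j$ to layer $j+1$, and each pair consisting of a layer-$j$ and a layer-$(j{+}1)$ vertex carries at most one edge, so $\delta_j(v)\,\delta_{j+1}(v)\ge (d-1)b_j(v)+1$. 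With $\delta_1(v)\ge d$ this recursion forces $b_j(v)$ to grow at least quadratically in $j$ until it reaches $g$. The second consequence is a \emph{packing bound}: if $i+i'<g$, then a vertex $u\ne v$ in both $B^+_i(v)$ and $B^-_{i'}(v)$ would yield a closed walk of length $i+i'<g$ and hence a cycle shorter than $g$; so $|B^+_i(v)|+|B^-_{i'}(v)|\le n+1$, and summing over $v$ (using $\sum_v|B^+_i(v)|=\sum_v|B^-_i(v)|$) gives, for example, $\sum_v|B^+_m(v)|+\sum_v|B^+_{2m}(v)|\le n(n+1)$.

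The heart of the proof is to play these two facts off against each other through a suitably weighted averaging over the vertices, and this is the step I expect to be the genuine obstacle. The growth estimate says that the out-ball of a given radius around any vertex must carry a large amount of ``mass'', while the packing bound says that, on average, the out-balls of radii $m$ and $2m$ cannot both be large; the factor $3$ in the theorem reflects the decomposition of a hypothetical cycle of length at most $3m$ into three arcs of length about $m$ (equivalently, the simultaneous use of the radii $m,m,2m$ subject to $3m<g$). To make this trade-off as sharp as possible one weights each vertex by $x^{-d(v,\cdot)}$ for a base $x>1$ and optimises over $x$; the optimal value is the positive root of $3x^2-4x-1=0$, namely $x=\frac{2+\sqrt7}{3}$, which is exactly what produces the constant $c=\ln x$ in the statement. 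Once the weight base is fixed, the incompatibility with $g\ge 3m+1$ and $m\ge\lceil cn/d\rceil$ is a direct, if delicate, computation. Everything else — the reduction to the strongly connected case, the growth estimate, and the packing bound — is routine bookkeeping.
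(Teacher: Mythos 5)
The paper does not prove Theorem~\ref{thm:Shen}; it is imported from Shen~\cite{cite:shen} as a black box, so there is no internal proof to compare your sketch against. I therefore evaluate the sketch on its own merits.

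The central step you call the ``growth estimate'' is wrong. You claim that, because the girth exceeds $b_j(v)$, the sub-oriented-graph induced on the out-ball $B^+_j(v)$ spans at most $b_j(v)-1$ edges, ``any more would force a cycle of length at most $b_j(v)<g$''. This is false: an acyclic digraph on $m$ vertices can carry up to $\binom{m}{2}$ edges (a transitive tournament being the extremal case), so an edge count of $m$ or more does not force a directed cycle. Hence the recursion $\delta_j(v)\,\delta_{j+1}(v)\ge(d-1)b_j(v)+1$ does not follow from the girth hypothesis. In fact it cannot be true: together with $\delta_1(v)\ge d$ it would force $b_j(v)$ to grow like $j^2 d$, giving a girth bound of order $\sqrt{n/d}$, which would be strictly stronger than the Caccetta--H\"aggkvist conjecture itself. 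The correct bound on the number of edges inside an acyclic out-ball is $\binom{b_j(v)}{2}$; this yields useful growth information only while $b_j(v)=O(d)$, and getting past that regime is precisely where Shen's actual argument does its work.

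Separately, you explicitly leave the weighted-averaging step --- where the constant $\ln\frac{2+\sqrt{7}}{3}$ is supposed to emerge from optimising the base $x$, and where the identity $3x^2-4x-1=0$ is asserted rather than derived --- as a gap you cannot fill. Even granting the pieces that are correct (the reduction to a sink strong component, and the packing bound $|B^+_i(v)|+|B^-_{i'}(v)|\le n+1$ for $i+i'<g$ together with the double count $\sum_v|B^+_i(v)|=\sum_v|B^-_i(v)|$), the sketch neither establishes the needed growth of the out-balls nor closes the averaging, so it does not constitute a proof of Shen's theorem.
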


\begin{cor}\label{cor-shen}
Let $k\in \mathbb N$ with $k\ge 7$. Then an oriented graph~$H$ on~$n$ vertices with minimum outdegree
$\delta^+(H)\ge 63n/32k$ has girth $g(H)<k.$
\end{cor}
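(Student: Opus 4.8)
The plan is to read the statement off directly from Theorem~\ref{thm:Shen}. Since $\delta^+(H)$ is an integer and $63n/(32k)>0$, the hypothesis forces $\delta^+(H)\ge\lceil 63n/(32k)\rceil\ge 1$, and in particular $n/\delta^+(H)\le 32k/63$. Set $c:=\ln\frac{2+\sqrt 7}{3}$ and apply Theorem~\ref{thm:Shen} with $d:=\delta^+(H)$; since $c>0$ and $x\mapsto 3\lceil cx\rceil$ is non-decreasing, this gives
\[
g(H)\ \le\ 3\left\lceil c\cdot\frac{n}{\delta^+(H)}\right\rceil\ \le\ 3\left\lceil \frac{32c}{63}\,k\right\rceil .
\]
Thus everything reduces to the purely numerical claim that $3\lceil (32c/63)k\rceil<k$ for every integer $k\ge 7$.

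For this I would first record the elementary estimate $c<7/16$ (for instance $\sqrt 7<2.6458$ gives $\frac{2+\sqrt 7}{3}<1.5486$, and $1.5486<e^{7/16}$ then follows by comparison with a few terms of the exponential series). Hence $\frac{32c}{63}<\frac{32}{63}\cdot\frac{7}{16}=\frac29$, so for $k\ge 9$ the crude bound $\lceil x\rceil<x+1$ already suffices:
\[
3\left\lceil \tfrac{32c}{63}\,k\right\rceil\ <\ 3\left(\tfrac{32c}{63}\,k+1\right)\ <\ \tfrac23 k+3\ \le\ k .
\]
The two remaining values are handled by inspection: for $k\in\{7,8\}$ one has $(32c/63)k<\frac29 k<2$, so the ceiling equals $2$ and $3\cdot 2=6<k$. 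Altogether $g(H)<k$, as desired.

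There is no genuine obstacle in this argument; the only step requiring any care is the estimate on $\ln\frac{2+\sqrt 7}{3}$, because the inequality $3\lceil (32c/63)k\rceil<k$ is genuinely tight at $k=9$: there $(32c/63)k\approx 1.999$ lies just below $2$, and the claim amounts to $c\le 7/16$, which holds with very little room to spare. This is exactly what pins down the constant $63/32$ in the hypothesis (it is only marginally larger than $9c/2$), and also why $k\ge 7$ is assumed — for $k\le 6$ the analogous calculation would force a strictly larger constant. In particular a bound on $c$ as loose as $c<0.44$ would already fail to settle the case $k=9$.
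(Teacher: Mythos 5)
Your argument is correct and follows essentially the same route as the paper: apply Shen's theorem with $d=\delta^+(H)$, bound $3\lceil x\rceil<3(x+1)$ to handle large $k$, and verify the remaining small values directly. The only (cosmetic) difference is that the paper's generic bound only kicks in at $k\ge 10$ (so $k=7,8,9$ are checked separately, all giving girth $\le 6$), whereas your cleaner rational estimate $\ln\frac{2+\sqrt 7}{3}<\tfrac{7}{16}$ pushes the threshold down to $k\ge 9$ and isolates the right tight constant; both computations are sound.
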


\begin{proof}
For $k\ge 10>\frac {10^3}{101}$, Theorem~\ref{thm:Shen} implies that the girth of $H$ is at most 
\[3\left\lceil\left(\ln\frac{2+\sqrt{7}}{3}\right)\frac {32k}{63}\right\rceil\le 3\left( \left (\ln\frac {2+\sqrt{7}}{3}\right)\frac {32k}{63}+1\right)
\le 0.67k+\frac {3\cdot 101k}{10^3}
=0.973k.\]
For the values $k=7,8,9$, from Theorem~\ref{thm:Shen} we obtain that the girth of $H$ is at most $6$.
\end{proof}

\begin{lem}\label{lem-diam} Let $k\in \mathbb N$ with $k\ge 7$ and let~$H$ be an
oriented graph on $n\ge 64k$ vertices, with $\delta^0(H)>n/k$ and
with girth $g(H)\ge k$. Then there is a directed path of length at most~$64k$ from any
vertex~$x$ of~$H$ to any other vertex~$y$ of~$H$.
\end{lem}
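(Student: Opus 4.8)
The plan is to fix vertices $x$ and $y$ and build a directed path from $x$ to $y$ in two halves: push forward from $x$ and backward from $y$ until the two ``reachable sets'' are forced to meet. For a set $S\subseteq V(H)$ write $N^+(S)$ for the out-neighbourhood and $R^+_i(x)$ for the set of vertices reachable from $x$ by a directed path of length at most $i$. The key point is an \emph{expansion} claim: as long as $|R^+_i(x)|$ is not too large, the next layer is strictly and substantially bigger. Concretely, I would first observe that if $A=R^+_i(x)$ then $A\cup N^+(A)=R^+_{i+1}(x)$, and that $|N^+(A)\setminus A|$ is large unless $A$ already ``captures'' most of its out-neighbourhoods. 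The obstruction to naive expansion is that $A$ could send many edges back into itself; this is exactly where the girth hypothesis $g(H)\ge k$ enters. If $A$ has small diameter (every two vertices of $A$ are joined by a short directed path within $A$) and $A$ sends too many edges internally, one produces a short closed directed walk, hence — after the usual argument — a short directed cycle, contradicting $g(H)\ge k$ once the diameter bound stays below $k$.

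Here is the structure in order. Step~1: show that for every vertex $x$ and every $i$, either $|R^+_i(x)|\ge n/2$, or $|R^+_{i+1}(x)|\ge |R^+_i(x)|+\delta^+(H)/2 > |R^+_i(x)| + n/(2k)$; the symmetric statement holds for the in-neighbourhood sets $R^-_i(y)$. The rough reason: let $A=R^+_i(x)$ with $|A|<n/2$. Each vertex of $A$ has at least $n/k$ out-neighbours. If $|N^+(A)\setminus A|$ were smaller than $\delta^+(H)/2$, then counting edges out of $A$, a typical vertex of $A$ sends at least half of its out-edges inside $A$; combined with the fact (to be justified) that $A$ has small internal directed diameter, we can close up a directed walk of length less than $k$ and extract a cycle of length less than $k$, contradicting $g(H)\ge k$. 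Step~2: iterate. Starting from $|R^+_0(x)|=1$, after at most $2k$ steps either we reach a set of size $\ge n/2$ or we have grown past $n/2$ anyway, so $|R^+_{2k}(x)|\ge n/2$; likewise $|R^-_{2k}(y)|\ge n/2$ — but we actually want a little slack, so iterate roughly $32k$ steps on each side to be safe and to absorb the constant losses. Step~3: since $|R^+_{32k}(x)| + |R^-_{32k}(y)| > n$, the two sets intersect in some vertex $z$; concatenating the directed $x$–$z$ path (length $\le 32k$) with the directed $z$–$y$ path (length $\le 32k$) gives a directed walk from $x$ to $y$ of length at most $64k$, from which a directed path of length at most $64k$ is extracted by deleting cycles.

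The main obstacle — and the only genuinely delicate point — is controlling the internal directed diameter of the growing set $R^+_i(x)$ so that the cycle-extraction argument in Step~1 actually produces a cycle \emph{shorter than $k$}. One has to maintain inductively that any two vertices of $R^+_i(x)$ are joined by a short directed path; since $R^+_i(x)$ is built by successive out-expansions from a single vertex $x$, any vertex of $R^+_i(x)$ is reachable from $x$ in $\le i$ steps, but two arbitrary vertices of the set need not be close \emph{to each other} within the set. The trick is to track instead the quantity ``distance from $x$'': if a vertex $v\in A$ sends an edge to a vertex $u\in A$ with $\mathrm{dist}(x,u)\le \mathrm{dist}(x,v)$, then the $x$–$v$ directed path of length $\le i\le 32k$ together with the edge $vu$ and a short path back is not automatically a cycle, so one instead argues that if \emph{few} new vertices appear, then \emph{many} edges from the last layer go to vertices at distance $\le i$, and a pigeonhole/averaging argument over distance classes yields two vertices $v,v'$ in the same layer with an edge between consecutive layers closing a cycle of length $O(1)$, well below $k$ by the $\ell\ge 10^7k^6$ slack (which is far more room than needed — the constant $64$ is crude). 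I would therefore organise Step~1 as: assume $|R^+_{i+1}(x)\setminus R^+_i(x)|<\delta^+(H)/2$, count edges from the ``frontier'' $R^+_i(x)\setminus R^+_{i-1}(x)$, and use that these edges mostly land in $R^+_i(x)$ to find, by averaging over the at most $i+1\le 32k+1$ distance-classes, a short directed cycle — contradiction. The rest is bookkeeping with the crude constant $64k$, which comfortably absorbs all the factor-$2$ losses.
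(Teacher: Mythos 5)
Your overall structure is the same as the paper's: expand a reachability set forward from~$x$ and backward from~$y$ until both exceed $n/2$, intersect, and concatenate to get an $x$--$y$ path of length at most $64k$. That skeleton is fine. The problem is that your expansion step — the claim that a small out-boundary of $A=R^+_i(x)$ forces a cycle of length less than~$k$ — is not actually established, and the route you sketch towards it does not work.

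There are two concrete difficulties with the cycle-extraction you propose. First, the ``small internal directed diameter of~$A$'' that you want to exploit is not available. Every vertex of $R^+_i(x)$ is reachable from~$x$ in at most~$i$ steps, but there is no short path \emph{between} two arbitrary vertices of the set, and in particular no short path back into any given vertex~$v$; indeed, bounded directed diameter is essentially what Lemma~\ref{lem-diam} is \emph{trying to prove}, so assuming it here is circular. Second, the ``averaging over distance classes'' step is the wrong picture: in an undirected graph, an edge between two vertices in the same BFS layer closes a short cycle via their common ancestor, but in an oriented graph the BFS tree only provides directed paths \emph{away} from~$x$, so an edge $v\to v'$ between same- or nearby-layer vertices does not close a directed cycle. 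Also, the $\ell\ge 10^7k^6$ ``slack'' you invoke is irrelevant here — $\ell$ does not appear in the statement of Lemma~\ref{lem-diam} at all. Finally, even if these issues were fixed, your proposed threshold $\delta^+(H)/2$ is too generous: it only gives $\delta^+(H[A])>|A|/k$, and even the sharp Caccetta--H\"aggkvist-type bound would then give girth about $1.3k$, not $<k$, so no contradiction.

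The paper sidesteps all of this by invoking Shen's theorem (Theorem~\ref{thm:Shen}, packaged as Corollary~\ref{cor-shen}): an oriented graph with minimum out-degree at least $63m/(32k)$ on~$m$ vertices has girth less than~$k$. If some set~$A$ with $|A|\le n/2$ had no vertex with at least $n/(64k)$ out-neighbours outside~$A$, then $\delta^+(H[A])\ge 63n/(64k)\ge 63|A|/(32k)$, and Shen's bound yields a cycle of length less than~$k$ inside~$A$, contradicting $g(H)\ge k$. No diameter control is needed, and the constant $63/64$ (rather than your $1/2$) is exactly what makes Shen's quantitative bound land strictly below~$k$. The paper then expands one vertex's out-neighbourhood at a time (gaining at least $n/(64k)$ new vertices per step), so $32k$ steps suffice on each side — hence the $64k$. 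If you want to keep your outline, you should replace the ``counting + diameter + pigeonhole'' step by a direct application of Shen's theorem to $H[A]$, and tighten the threshold from $\delta^+(H)/2$ to something like $n/(64k)$.
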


\begin{proof}
We claim that any subset~$A$ of order $a\le \frac
n2$ contains a vertex with at least $
n/64k\ge 1$
outneighbours outside~$A$. Suppose not. Then the minimum
outdegree of $H[A]$ is at least $\frac {63n}{64k}\ge \frac
{63a}{32k}$. Now Corollary~\ref{cor-shen} implies that the
girth of~$H[A]$ (and thus of $H$) is less than $k$, a contradiction.

Set $A_1:= N^+(x)\cup \{x\}$. We have $|A_1|> n/k$. If
$|A_1|\le n/2$ then by the above claim there is a vertex
$v_1\in A_1$ with $|N^+(v_1)\setminus A_1| \ge 
n/64k$. In this case set $A_2:= A_1\cup N^+(v_1)$ and
continue in this way. 
After at most~$32k$ steps, we
have $|A_i|> n/2$, for some~$i\le 32k$.

Set $B_1:= N^-(y)\cup \{y\}$ and proceed analogously to obtain
that $|B_j|>n/2$ for some $j\le 32k$. Then $A_i\cap B_j\neq
\emptyset$ and there exists a directed path of length at most $64k$ from $x$ to $y$ whose vertices lie in $A_i\cup B_j$.
\end{proof}

\begin{lem}\label{lem:or-cycles}
Let $k$ and $\ell$ be as in $(*)$.
Let~$H$ be an oriented graph which contains a directed path of length at most~$64k$ 
from any vertex to any other vertex. If~$H$ contains a
closed walk~$W$ with~$a$ edges going forward and~$b$ edges going backwards,
for some $a\neq b$ with $a+b=|V(W)|<k$, then there is a closed directed walk of length~$\ell$ in~$H$.
\end{lem}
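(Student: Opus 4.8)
### Proof plan for Lemma~\ref{lem:or-cycles}

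The idea is to turn the closed walk $W$ into a closed \emph{directed} walk by replacing each backward edge with a short directed path, and then to adjust the total length to exactly $\ell$ by winding suitably. So first I would fix the closed walk $W$ and write its vertices in cyclic order, say $W = v_0 e_1 v_1 e_2 \dots e_{a+b} v_{a+b}=v_0$, where $a$ of the edges $e_i$ point forward (from $v_{i-1}$ to $v_i$) and $b$ point backward (from $v_i$ to $v_{i-1}$). For each forward edge I keep it as it is, contributing a directed path of length $1$. For each backward edge $e_i$ (oriented $v_i \to v_{i-1}$ in $H$, but traversed from $v_{i-1}$ to $v_i$ along $W$), I use the hypothesis to replace the traversal by a directed path in $H$ from $v_{i-1}$ to $v_i$ of some length $\lambda_i$ with $1 \le \lambda_i \le 64k$. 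Concatenating all these directed paths and edges around the cycle yields a closed directed walk $W'$ in $H$ of total length
$$
L \;=\; a \;+\; \sum_{i \text{ backward}} \lambda_i,
$$
which satisfies $a + b \le L \le a + 64kb < 64k^2$.

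The second step is a length-adjustment argument to hit $\ell$ exactly. Since $a\neq b$, note that $W$ itself, regarded as a closed walk, has "winding number" $a-b \neq 0$; concretely, the closed directed walk $W'$ contains, as a sub-walk obtained by going around once, something of total directed length $L$, but I can also go around $W$ using a \emph{different} set of detours. The key observation is that for the backward edge $e_i$ I am free to choose \emph{any} directed path from $v_{i-1}$ to $v_i$; in particular I may insert extra "loops". Here I would use girth/connectivity more carefully: pick one vertex $v$ on $W$, and observe that from $v$ there is a directed path back to $v$ of length at most $64k$ obtained by going once around the modified walk — more usefully, I want a directed closed walk through $v$ whose length I can vary in steps of some controlled amount. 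Actually the cleanest route is: build the closed directed walk $W'$ of length $L$ as above, and separately build a \emph{second} closed directed walk $W''$ through some common vertex by choosing, for \emph{one} of the backward edges, a longer replacement path. The difference in lengths between two such choices is at most $64k-1$; by iterating (choosing detours of every length between $1$ and $64k$ for that one backward edge — using that the $(v_{i-1},v_i)$-path can be re-routed, or simply appending a $\le 64k$ detour $v_{i}\rightsquigarrow v_i$), I obtain closed directed walks through a fixed vertex of every length in an interval, and then winding gives all large enough lengths in a full residue system.

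The honest technical heart — and the step I expect to be the main obstacle — is the number-theoretic bookkeeping that shows the set of achievable closed-directed-walk lengths actually contains $\ell$. What makes it work is the hypothesis $(*)$: $k$ is the smallest integer $>2$ not dividing $\ell$, and $a+b<k$ with $a\neq b$. The plan is to show that by choosing the detour lengths $\lambda_i$ appropriately (each between $1$ and $64k$) together with winding multiplicities, the attainable lengths form, for each residue class $r \bmod m$ with $m := \gcd$ of the available "step sizes", all sufficiently large integers; and then to check that $\ell$ lies in the right class and is large enough ($\ell \ge 10^7 k^6$ is comfortably bigger than $64k^2$, so "sufficiently large" is automatic). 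The reason $m$ divides $\ell$: the available steps will be built from the numbers $1,2,\dots,k-1$ (via rerouting a single backward edge through short cycles — here I use $g(H)\ge k$ only to know short cycles may \emph{not} exist, so instead one adds $1$ repeatedly by re-routing along forward edges...) — so $m \le \gcd(a,b,\dots) $ which divides $a+b < k$, hence $m<k$, and by $(*)$ every integer less than $k$ other than those $\le 2$... \emph{every divisor of $a+b$ that exceeds $2$ divides $\ell$}, while the small cases $m=1,2$ are handled directly since then all large integers of the right parity are attainable and $a\neq b$ guarantees one can fix parity. Assembling these cases — $m=1$, $m=2$, and $m = a+b$ or a proper divisor thereof with $m \ge 3$ (forcing $m \mid \ell$) — and in each case verifying that $\ell$ is attained, completes the proof. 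I would organise the write-up by first doing the detour replacement, then isolating a clean sublemma of the form "$H$ contains closed directed walks through a fixed vertex of every length in $\{c, c+1, \dots, c + k\}$ for some $c \le 64k^2$", and finally the modular argument winding these up to length exactly $\ell$.
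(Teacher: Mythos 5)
Your plan starts the same way as the paper's proof: replace each maximal backward segment of $W$ by a short directed path in $H$, obtaining a closed directed walk, and then wind to hit length $\ell$. But the length-adjustment step, which you correctly identify as the technical heart, contains a genuine gap.

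Your strategy is to vary the replacement-path lengths $\lambda_i$ (or to append detours of controllable length) so that the attainable lengths of closed directed walks fill an interval $\{c,c+1,\dots,c+k\}$. This cannot work. The hypothesis gives you \emph{one} directed path of length at most $64k$ between each pair, not paths of every intermediate length; and in the key application of this lemma $H$ is bipartite with girth at least~$k$, so every closed directed walk has even length and there are no short cycles to splice in --- the set of attainable lengths is certainly not an interval. The remark about ``adding $1$ repeatedly by re-routing along forward edges'' has no justification, and the modular bookkeeping is also off: you try to extract divisibility of $\ell$ from divisors of $a+b$, but the relevant quantity is $a-b$ (note $a+b$ is even in the bipartite case, while it is $a\ne b$ that puts you in business).

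The paper's way around all this is a specific trick your proposal is missing: it fixes the detours $P_{yx}$ once and for all and builds \emph{two} closed directed walks, $W_1$ (forward edges of $W$ plus the detours, length $a+l(P)$) and $W_2$ (every edge of $W$ once plus each detour twice, length $a+b+2l(P)$). These satisfy $2l(W_1)-l(W_2)=a-b$, so $h:=\gcd(2l(W_1),l(W_2))=\gcd(a-b,l(W_2))$ satisfies $0<h\le a-b<k$, and then $(*)$ forces $h\mid \ell$. A B\'ezout combination, winding nonnegatively many times around $W_1$ and $W_2$ (nonnegativity is where $\ell\ge 10^7k^6$ is used), gives a closed directed walk of length exactly $\ell$. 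Without this pair of walks, or some equivalent device producing a small controllable difference, the argument does not close.
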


\begin{proof}
Let~$W$,~$a$ and~$b$ be as in the lemma.
We may assume without loss of generality  that
\begin{equation}\label{eq:a<b}
a>b.
\end{equation} 

We now consider any maximal subwalk  in~$W$ with all edges
oriented backwards. Let~$x$ and~$y$ denote the endvertices of this walk such that all the edges are oriented from $x$ towards $y$.
Let~$P_{yx}$ be a directed path in~$H$ from~$y$ to~$x$ of length at most~$64 k$. We find such a
path for each maximal subwalk of~$W$ that is oriented
backwards. Let~$P$ be the union of all these
paths~$P_{yx}$ and~$l(P)$ be the sum of their lengths. So
$l(P)<\frac k2\cdot 64k=32k^2$.

There exists a closed directed walk~$W_1$ of length $a+l(P)$
in~$H$ consisting of the edges of~$W$ going forwards and~$P$.
Also there exists a closed directed walk~$W_2$ of length
$a+b+2l(P)$ using each edge of the closed walk~$W$ exactly once and
each path~$P_{yx}$ exactly twice (see Figure~\ref{fig:walks}).
In the case when~$b=0$ we have $W=W_1=W_2$.

\begin{figure}[h]
     \centering
     \subfigure[The walk~$W_1$ uses only edges from the
     walk~$W$ going forwards and the paths~$P_{yx}$
     otherwise.]{
          \label{fig:W1}
          \includegraphics[width=.47\textwidth]{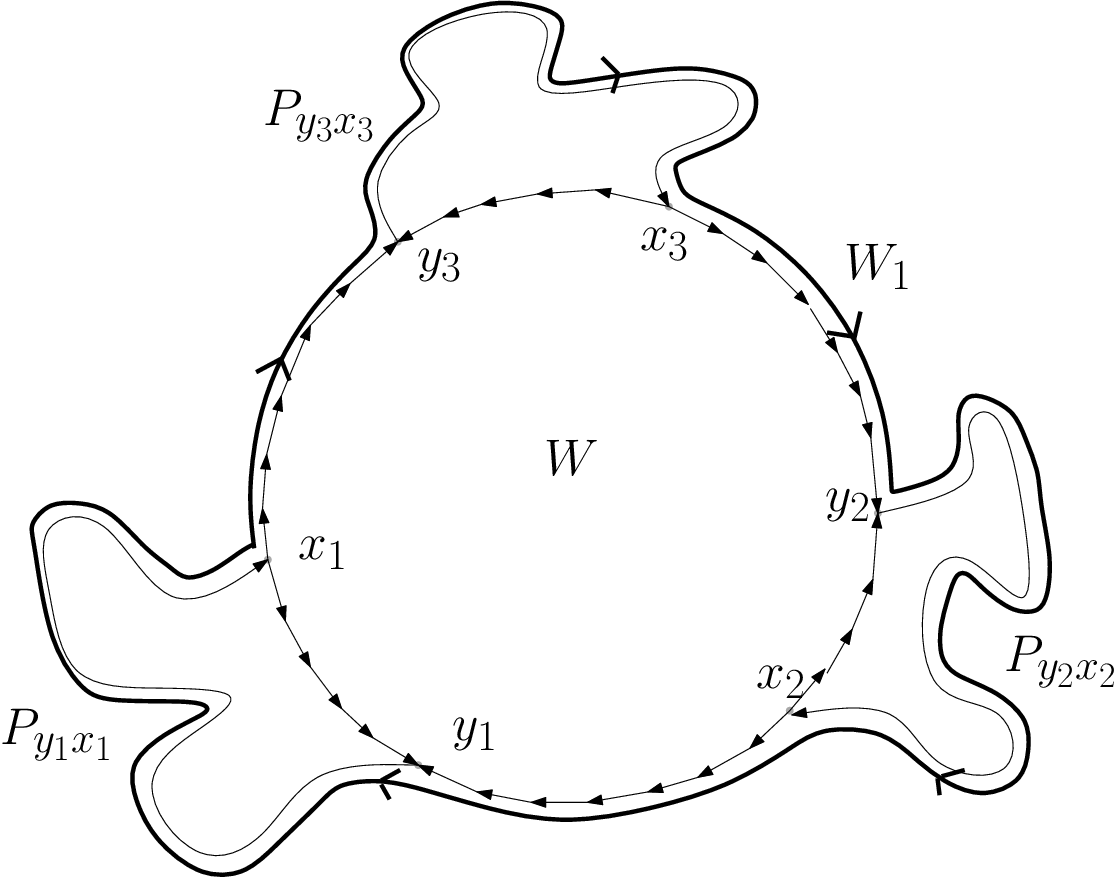}}
     \hspace{.05in}
     \subfigure[The walk~$W_2$ uses every edge of the walk~$W$
     once and each path~$P_{yx}$ twice.]{
          \label{fig:W_2}
          \includegraphics[width=.47\textwidth]{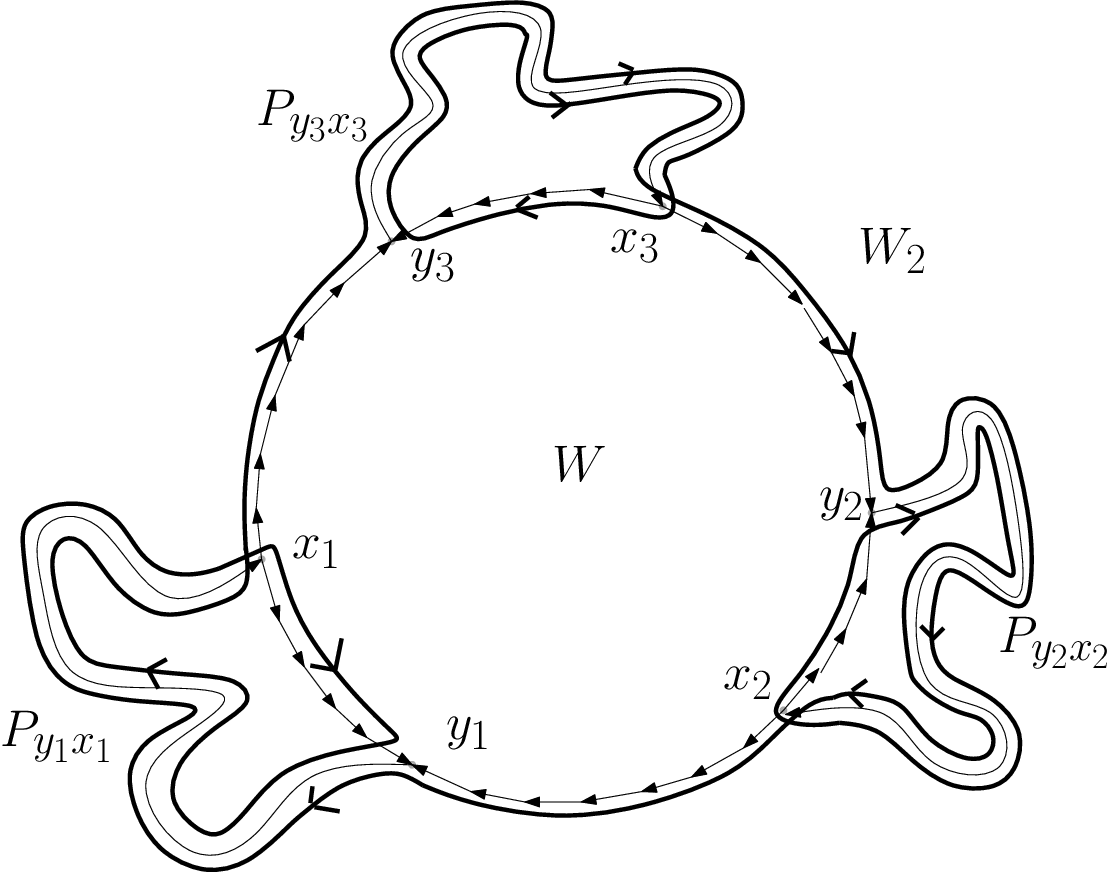}}
          \caption{The two different walks~$W_1$ and~$W_2$.
          For simplicity the figure illustrates the case when the
          closed walk~$W$ is a cycle.}
\label{fig:walks}
\end{figure}

By~\eqref{eq:a<b} we have $2l(W_1)-l(W_2)=a-b>0$, where $l(W_1)$ and $l(W_2)$ denote the lengths of the walks $W_1$ and $W_2$, respectively.
Let~$h:=gcd(2l(W_1),l(W_2)))$ be the greatest common divisor
of~$2l(W_1)$ and~$l(W_2)$. Thus 
\begin{equation}\label{eq:h}
0<h=gcd(a-b, l(W_2))\le a-b<k.
\end{equation}
 By B\'ezout's identity
there are $p,q\in \mathbb Z$ such that
\begin{equation}\label{eq:1}
q\cdot 2l(W_1)+p\cdot l(W_2)=h.
\end{equation}
Moreover we may assume that $0\le q< l(W_2)< k+64k^2$ and $-p< 2l(W_1)< 2k+64k^2$.
Indeed, by replacing $q$ with $q+s\cdot l(W_2)$ and $p$ with $p-s\cdot 2l(W_1)$
for a suitable $s\in\mathbb{Z}$ we may assume that $0\le q<l(W_2)$. But then 
\begin{equation}\label{eq:-p}
-p\overset{\eqref{eq:1}}{=}(q\cdot 2l(W_1)-h)/l(W_2)<2l(W_1).
\end{equation}

From $(*)$ and $(\ref{eq:h})$ we obtain that~$h$ divides~$\ell$. Let $\ell'=\frac \ell h$,
$r=\lfloor\frac {\ell'}{2l(W_1)+l(W_2)}\rfloor$, 
$u=2rh+(\ell'-(2l(W_1)+l(W_2))r)2q$ and 
$v=rh+(\ell'-(2l(W_1)+l(W_2))r)p$.  Let~$W^*$ be the
closed walk obtained by winding around~$W_1$~$u$ times and
winding around~$W_2$~$v$ times. Note that~$W^*$ is well defined. For this, first note that $q \ge 0$ implies that $u \ge 0$.
Secondly, by~\eqref{eq:-p} we also have
\begin{align*}
v &\ge rh-(\ell'-(2l(W_1)+l(W_2))r)(2k+64k^2)\\
&>h\left(\frac
{\ell'}{2l(W_1)+l(W_2)}-1\right)-(2l(W_1)+l(W_2))(2k+64k^2)\\
&\ge h\frac {\ell'}{2l(W_1)+l(W_2)}-h-(3k+128k^2)(2k+64k^2)\\
&\overset{(*)\&\eqref{eq:h}}{>}  \frac {\ell}{129k^2}- k-129k^2\cdot 65k^2\overset{(*)}{\ge} 0.
\end{align*}
The total length of the
closed directed walk~$W^*$ is
\begin{align*}
&u\cdot l(W_1)+v\cdot l(W_2)\\
&=rh(2l(W_1)+l(W_2))+(\ell'-(2l(W_1)+l(W_2))r)(q\cdot
2l(W_1)+p\cdot l(W_2))\\
&\overset{\eqref{eq:1}}{=}rh(2l(W_1)+l(W_2))+(\ell'-(2l(W_1)+l(W_2))r)h\\ &=\ell'h=\ell,
\end{align*}
as required.
\end{proof}

\begin{cor}\label{cor:or-cycle}
Let $k$ and $\ell$ be as in $(*)$. Suppose
that~$H$ is an oriented graph which contains
a directed path of length at most~$64k$ from any vertex to any other vertex.
If for some odd $s<k$ the graph~$H$ contains some
orientation of a cycle of length~$s$, then~$H$ contains a
closed directed walk of length~$\ell$. 
\end{cor}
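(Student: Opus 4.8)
The plan is to deduce this directly from Lemma~\ref{lem:or-cycles}, with the orientation of the $s$-cycle playing the role of the closed walk~$W$. So I would start by fixing an odd $s<k$ for which $H$ contains some orientation~$C$ of a cycle of length~$s$, and regard~$C$ as a closed walk~$W$ with $|V(W)|=s$. Writing~$a$ for the number of edges of~$C$ traversed forwards and~$b$ for the number traversed backwards, we have $a+b=s=|V(W)|<k$, so the condition $a+b=|V(W)|<k$ required by Lemma~\ref{lem:or-cycles} holds automatically.

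The only point that needs an observation is the hypothesis $a\neq b$ of Lemma~\ref{lem:or-cycles}. Here I would use that~$s$ is odd: since $a+b=s$ is odd, $a$ and~$b$ cannot be equal (equality would force $s=2a$ to be even). Hence $a\neq b$, and all hypotheses of Lemma~\ref{lem:or-cycles} are met.

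Applying Lemma~\ref{lem:or-cycles} to~$W$ (using the assumption that~$H$ has a directed path of length at most~$64k$ between any two of its vertices, and that~$k,\ell$ are as in~$(*)$) then yields a closed directed walk of length~$\ell$ in~$H$, as required.

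There is essentially no obstacle here; the corollary is a near-immediate specialization of Lemma~\ref{lem:or-cycles}. The one thing to make sure of is the parity remark $a\neq b$, which is exactly where the oddness of~$s$ (rather than just $s<k$) is used — without it, an orientation of an even cycle that is balanced between forward and backward edges would not fall under Lemma~\ref{lem:or-cycles}.
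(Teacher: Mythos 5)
Your proof is correct and is exactly the (unwritten) argument the paper intends: treat the oriented $s$-cycle as the closed walk $W$ with $a+b=|V(W)|=s<k$, and use the oddness of $s$ to guarantee $a\neq b$ so that Lemma~\ref{lem:or-cycles} applies. The paper omits this routine verification, so there is nothing to compare beyond noting your version makes the parity point explicit.
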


 Recall that we may assume that our oriented graph~$H$ has girth at least~$k$. Thus
 Lemma~\ref{lem-diam}, Corollary~\ref{cor:or-cycle} and the
following result of Andr\'asfai, Erd\H os and S\'os (see Remark~1.6 in~\cite{cite:AES}) together
imply that we may assume that the underlying graph of~$H$ is
bipartite. 
 
\begin{thm}[Andr\'asfai, Erd\H os, S\'os]\label{thm:AES}
Let $k\ge 4$. Any undirected graph on~$n$ vertices which contains
no odd cycle of length less than~$k$ and has minimum degree
$\delta> \frac {2n}{k}$ is bipartite.
\end{thm}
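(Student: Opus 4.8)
The plan is to argue by contradiction, via a double count on a shortest odd cycle. Assume $G$ satisfies the hypotheses but is not bipartite, and let $C=x_1x_2\cdots x_gx_1$ be a shortest odd cycle of $G$. Since $k\ge 4$, the hypothesis already forbids triangles, so $g$ is odd with $g\ge 5$; moreover $g\ge k$, because $g$ is precisely the odd girth of $G$.

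First I would record the local structure of $C$ forced by its minimality. The cycle $C$ is chordless, since any chord would split $C$ into an even cycle and an odd cycle shorter than $g$. Hence each vertex $x_j$ of $C$ has exactly the two neighbours $x_{j-1},x_{j+1}$ on $C$, so $x_j$ lies in exactly two of the sets $N(x_1),\dots,N(x_g)$. The crux is to prove the same bound for vertices off $C$, namely that every $v\in V(G)\setminus V(C)$ has at most two neighbours on $C$. Suppose $v$ is adjacent to $x_i$ and $x_j$, and let $d\le g/2$ be the length of the shorter of the two arcs of $C$ between them; these two arcs, each closed up through $v$, are cycles of lengths $d+2$ and $g-d+2$, and since their sum $g+4$ is odd exactly one of the two lengths is odd. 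Triangle-freeness excludes $d=1$, and comparing the odd one of $d+2,\,g-d+2$ with the odd girth $g$ (using $g/2<g-2$, valid as $g\ge 5$) forces $d=2$. So any two neighbours of $v$ on $C$ are at cyclic distance exactly $2$; if $v$ had three, they would be $x_i,x_{i+2},x_{i-2}$ up to relabelling, but $x_{i-2}$ and $x_{i+2}$ are then at cyclic distance $\min\{4,g-4\}\ne 2$ since $g$ is odd, contradicting the previous sentence.

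Given these two facts the theorem is a one-line count. Counting incidences between $V(G)$ and the sets $N(x_1),\dots,N(x_g)$ in two ways yields
\[
\sum_{i=1}^{g}\deg(x_i)\;=\;\sum_{i=1}^{g}|N(x_i)|\;=\;\sum_{v\in V(G)}\bigl|\{\,i:\,v\in N(x_i)\,\}\bigr|\;\le\;2n .
\]
On the other hand $\delta(G)>2n/k$ and $g\ge k$ give $\sum_{i=1}^{g}\deg(x_i)\ge g\,\delta(G)>g\cdot 2n/k\ge 2n$, a contradiction; hence $G$ must be bipartite.

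The one genuinely delicate step is the claim that a vertex off $C$ has at most two neighbours on $C$, and in particular the exclusion of a third such neighbour. The reduction to the observation that all pairwise cyclic distances among these neighbours must equal $2$ is what makes it manageable, after which the parity of $g$ finishes it; one just has to check that the auxiliary cycles produced by the extra vertex are honest cycles (no repeated vertices) and that the small cases ($g=5,7$, which can occur when $k$ is as small as $4$ or $5$) are covered by the same argument — they are. Everything else is bookkeeping.
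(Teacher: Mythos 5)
Your proof is correct, but note the paper does not actually prove Theorem~\ref{thm:AES}; it is quoted as a black box from Andr\'asfai, Erd\H{o}s and S\'os (Remark~1.6 of~\cite{cite:AES}), so there is no internal proof to compare against. Your argument is a clean, self-contained route to the result: fixing a shortest odd cycle~$C$ of length~$g\ge k$, showing (via chordlessness and the parity/girth analysis) that every vertex of~$G$ has at most two neighbours on~$C$, and then the double count $g\,\delta<\sum_{i}\deg(x_i)\le 2n$ against $g\,\delta>g\cdot 2n/k\ge 2n$ gives the contradiction. The delicate points you flag --- that the auxiliary closed walks through $v$ are genuine cycles (they are, since $v\notin V(C)$ and the arcs are internally disjoint), and the elimination of a third neighbour of $v$ via $\min\{4,g-4\}\ne 2$ for odd $g$ --- all check out, including the small cases $g=5,7$.
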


%

\begin{lem}\label{lem:bip-case}
Let $k\in \mathbb N$  be odd or divisible by~$4$ and let~$H$ be an oriented
bipartite graph on~$n$ vertices with $\delta^0(H)>n/k$. Then there are natural
numbers $a\neq b$ with $a+b<k$ such that~$H$ contains a
closed walk~$W$ of length~$a+b$ with~$a$ edges oriented
forwards and~$b$ edges oriented backwards.
\end{lem}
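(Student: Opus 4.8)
The goal is to find, in an oriented bipartite graph $H$ with $\delta^0(H)>n/k$, a closed walk with $a$ forward and $b$ backward edges, $a\neq b$ and $a+b<k$. The natural approach is to build a short closed walk out of directed paths whose lengths we can control modulo small numbers, exploiting the bipartiteness. Let the bipartition be $V(H)=X\cup Y$. Since $H$ is bipartite, every walk alternates sides, so any closed walk has even length $a+b$, and moreover $a\equiv b \pmod 1$ trivially but more usefully: if we traverse a closed walk, the number of forward edges minus backward edges is the net displacement, and since the walk is closed this is... well, $a-b$ can be anything even. The key point is rather that in a bipartite graph we cannot wind around a short directed cycle (the girth is already assumed $\ge k$ and the underlying graph has no short odd cycle), so we must instead manufacture the required closed walk by hand.

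First I would fix a vertex $x$ and consider the sets reachable by short directed paths. Using the minimum outdegree condition $\delta^+(H)>n/k$, a greedy expansion argument (as in the proof of Lemma~\ref{lem-diam}) shows that the out-neighbourhood grows, but in the bipartite setting I want to track parities. Concretely, I would look at short directed walks from $x$ of length $t$ for consecutive values of $t$; because $H$ is bipartite these land alternately in $X$ and $Y$. The plan is to produce two directed walks $P_1,P_2$ from $x$ to some common vertex $z$ of different lengths $\ell_1<\ell_2$ with $\ell_2-\ell_1$ small (certainly $\ell_2 - \ell_1 < k$), ideally with $\ell_1,\ell_2$ both much less than $k$. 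Reversing one against the other gives a closed walk $W$ with $a=\ell_1$ forward edges along $P_1$... no — I must be careful: combining $P_1$ (forward) with $P_2$ reversed (backward) yields a closed walk with $\ell_1$ forward and $\ell_2$ backward edges, so $a=\ell_1$, $b=\ell_2$, $a\neq b$, and $a+b=\ell_1+\ell_2$; I then need this sum to be less than $k$, which is the real constraint.

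To get both path-lengths genuinely small I would argue as follows: starting from $x$, greedily extend a directed path; by the outdegree condition and girth $\ge k$ one can extend without immediate repetition, and within about $k/2$ steps (here bipartiteness and the density force enough expansion, or one invokes Corollary~\ref{cor-shen}/Lemma~\ref{lem-diam}-style expansion of neighbourhoods) one finds two directed walks from $x$ to a common endpoint whose lengths differ but sum to less than $k$. The cleanest version: show that among directed walks from $x$ of lengths in a window $\{t, t+1, \dots, t+k/2\}$ (for suitable small $t$), two must reach the same vertex, by a pigeonhole/expansion argument controlling the reachable set's size; since the underlying graph is bipartite, walks of even and odd length reach disjoint vertex sets, so the two coinciding walks automatically have the same parity, hence their length difference is even, guaranteeing $a\neq b$ has the right parity for a closed walk and — crucially — that we never accidentally create an odd closed walk, which the bipartiteness forbids anyway.

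**The main obstacle.** The delicate point is quantitative: forcing $a+b<k$, not merely $|a-b|<k$. Simply taking two paths between $x$ and $y$ via Lemma~\ref{lem-diam} gives lengths up to $64k$, far too long. So the heart of the argument is a sharper, localized expansion estimate showing that short directed walks from a single vertex already hit a repeated endpoint within $O(\sqrt n)$ or within $\le k/2$ steps — using that each vertex has out-degree $>n/k$, so after roughly $k/2$ branching-free extensions the reachable set would exceed $n/2$, forcing a collision, and the girth condition $\ge k$ prevents the collision from closing a short directed cycle, hence it closes a walk of the desired mixed type. Making these two competing length bounds (long enough to force a collision, short enough that the two lengths sum to $<k$) compatible, while keeping the $a\neq b$ condition, is where the case hypothesis "$k$ odd or divisible by $4$" presumably enters — it controls the parity/divisibility so that the collision cannot be balanced into $a=b$.
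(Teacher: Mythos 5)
Your high-level plan — fix a vertex $x$, grow sets of vertices reachable by short directed walks, and use pigeonhole together with the semidegree bound to force a coincidence that yields the desired closed walk — is indeed the engine of the paper's proof. But as written there is a genuine quantitative gap that cannot be closed in the form you propose, and you correctly sense (without resolving) where the trouble lies.

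The gap is this: you only grow \emph{forward} from $x$. Set $X_1 := N^+(x)$ and $X_{i+1}:=N^+(X_i)$; each $X_i$ has size $>n/k$, and if two of $X_1,\dots,X_m$ intersect with $m\le \lceil k/2\rceil$ you get the desired closed walk of length $<k$ (two forward walks to a common endpoint, one traversed backwards). To force such an intersection by counting you need the union of the $X_i$ to outgrow the available space. But $\lceil k/2\rceil$ pairwise-disjoint sets of size $>n/k$ only give a total of roughly $n/2$, which fits comfortably inside $V(H)$ (and even inside one colour class), so \emph{no contradiction arises}. The paper's fix is exactly what is missing from your sketch: grow \emph{both} out-neighbourhoods $X_i$ and in-neighbourhoods $Y_1:=N^-(x)$, $Y_{i+1}:=N^-(Y_i)$ from $x$. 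Under the contrapositive assumption one also gets $X_i\cap Y_j=\emptyset$ for the relevant index range (a nonempty intersection would give a short \emph{directed} closed walk, i.e.\ the case $b=0$ of the sought walk $W$, which you should note is automatically forbidden by the assumption — the hypothesis "girth $\ge k$" is not assumed in the lemma but is a consequence). With both families one has about $k$ pairwise-disjoint sets of size $>n/k$, which does overfill the space. The remaining work is precisely the case distinction you left open: for $k$ odd, the sum of all $k$ sets exceeds $n$ (the borderline case $i+j=k$ is ruled out because a closed walk of odd length cannot live in a bipartite graph); for $k\equiv 0\pmod 4$, one restricts attention to the odd-indexed $X_{2i-1},Y_{2i-1}$ with $2i-1<k/2$, which all lie in the smaller colour class $V_2$, and the $k/2$ of them exceed $|V_2|\le n/2$. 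So the parity/divisibility hypothesis is used to control disjointness at the boundary index and to choose the right host set for the final counting, rather than to rule out $a=b$ (which is automatic from $\ell_1\neq\ell_2$ in your setup). Without bringing in the backward sets $Y_i$ and this two-case counting, the pigeonhole simply does not fire.
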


\begin{proof}Assume for a contradiction that~$H$ contains no
such walk~$W$. Let~$V_1$ and~$V_2$ be the colour classes
of~$H$ and without loss of generality  assume that $|V_1|\ge |V_2|$. 

Pick an arbitrary
vertex $x\in V_1$. 
Let $X_1:= N^+(x)$, $Y_1:=N^-(x)$, and for
each $i\ge 1$, set $X_{i+1}:=N^+(X_i)$ and
$Y_{i+1}:=N^-(Y_i)$. 
Now observe that $X_i\cap Y_j=\emptyset$ whenever
$i,j<\frac {k}{2}$ or both $i\neq j$ and $\max\{i,j\}=
\lceil\frac {k}{2}\rceil$. Indeed, this holds since our assumption on~$H$ implies that~$H$ contains no
directed cycle of length less than~$k$ and since
there is no cycle of length~$k$ when~$k$ is odd, as~$H$ is bipartite.


\begin{cl}
For all $i\neq j$ with $i, j\le\lceil\frac {k}2\rceil$, we have $X_i\cap
X_j=\emptyset$ and $Y_i\cap Y_j=\emptyset$.
\end{cl}

We prove only the first instance of the claim as the second is done
analogously. Suppose for a contradiction that $v\in X_i\cap
X_j$, for some  $j<i\le \lceil\frac {k}{2}\rceil$. By
definition of~$X_i$ there is a directed walk~$P$ of
length~$i$ from~$x$ to~$v$ and by
definition of~$X_j$ there is a directed walk~$P'$ of
length~$j$ from~$x$ to~$v$.
Then~$P\cup P'$
forms a closed walk of length $i+j\le 2\lceil\frac
{k}{2}\rceil -1$, with~$i$ edges going in one direction and~$j$ edges going in the other direction. If
$i+j=k$, then~$k$ is odd and this contradicts the assumption that~$H$ is
bipartite. If $i+j<k$ this contradicts our assumption that for all $a\neq b$ with $a+b<k$ there is no
closed walk~$W$ with~$a$ edges going forwards and~$b$
edges going backwards. This proves the claim.

Observe that for any $i\le \lceil\frac
{k}2\rceil$, we have $|X_i|, |Y_i|> \frac nk$. Consider first the case when~$k$
is odd. We obtain
\[
|V(H)|\ge \left|\left(\dot\bigcup_{i<\lceil\frac {k}2\rceil} (X_i\dot\cup
Y_i)\right)\dot\cup (X_{\lceil\frac {k}2\rceil}\cup Y_{\lceil\frac {k}2\rceil})\right|> \frac
{k-1}2\cdot \frac {2n}k+\frac nk=n,
\]
a contradiction. Now consider the case when~$4$ divides~$k$. Then we get
\[
|V_2|\ge \left|\dot\bigcup_{i\le\frac k4} (X_{2i-1}\dot\cup
Y_{2i-1})\right|> \frac
k4\cdot \frac {2n}k=\frac n2,
\]
a contradiction to the assumption that $|V_2|\le |V_1|$ and thus $|V_2|\le
\frac n2$. This finishes the proof of
Lemma~\ref{lem:bip-case}.
\end{proof}

Let us summarise the above observations in the following
lemma.
\begin{lem}\label{lem:resume}
Let~$k$ and~$\ell$ be as in $(*)$. Then any oriented graph~$H$ on $n\ge 64k$ vertices with
$\delta^0(H)>n/k$ contains a closed directed walk of length~$\ell$.
\end{lem}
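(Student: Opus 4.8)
The plan is to collect all the case analysis that has been developed through the section into one clean argument. Let $H$ be an oriented graph on $n\ge 64k$ vertices with $\delta^0(H)>n/k$, and let $k$ and $\ell$ be as in $(*)$. First I would dispose of the small-girth case: if $g(H)<k$, then Proposition~\ref{prop:s-cycle} immediately gives a closed directed walk of length $\ell$, and we are done. So from now on assume $g(H)\ge k$; in particular $\delta^0(H)>n/k$ together with $n\ge 64k$ means Lemma~\ref{lem-diam} applies, so $H$ contains a directed path of length at most $64k$ between any ordered pair of vertices.

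Next I would split on the parity and divisibility structure of $k$. Recall (as noted after Conjecture~\ref{conj:KDD}) that $k$ must be a prime power $p^s$; in particular $k$ is either odd or $k=2^s$ with $s\ge 1$. Since $k\ge 7$, if $k$ is even it is divisible by $4$. So in every case $k$ is odd or divisible by $4$, which is exactly the hypothesis of Lemma~\ref{lem:bip-case}. Consider first the subcase where the underlying undirected graph of $H$ is \emph{not} bipartite. Since $H$ has girth at least $k$, it contains no directed cycle of length less than $k$; combined with the fact that the underlying graph has minimum degree at least $2\delta^0(H)>2n/k$ and is non-bipartite, Theorem~\ref{thm:AES} forces $H$ to contain some (not necessarily directed) cycle of odd length $s<k$. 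Then Corollary~\ref{cor:or-cycle}, which we may invoke because of the path property established above, yields a closed directed walk of length $\ell$.

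In the remaining subcase the underlying graph of $H$ \emph{is} bipartite. Then Lemma~\ref{lem:bip-case} (legitimate since $k$ is odd or divisible by $4$) produces natural numbers $a\ne b$ with $a+b<k$ and a closed walk $W$ of length $a+b$ in $H$ with $a$ edges oriented forwards and $b$ oriented backwards. Feeding this $W$, together with the path property, into Lemma~\ref{lem:or-cycles} gives a closed directed walk of length $\ell$. This exhausts all cases, so $H$ always contains a closed directed walk of length $\ell$, which is the assertion of Lemma~\ref{lem:resume}.

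The argument is essentially pure bookkeeping: the real content has already been established in Proposition~\ref{prop:s-cycle}, Lemma~\ref{lem-diam}, Lemma~\ref{lem:or-cycles}, Corollary~\ref{cor:or-cycle}, Theorem~\ref{thm:AES} and Lemma~\ref{lem:bip-case}. The only point that requires a moment's care is checking that the case distinctions are genuinely exhaustive — in particular the observation that $k$ being a prime power at least $7$ means $k$ is odd or a multiple of $4$, so that Lemma~\ref{lem:bip-case} is always applicable in the bipartite subcase, and that girth $\ge k$ is exactly what lets us pass from Theorem~\ref{thm:AES} to the existence of a short odd cycle feeding Corollary~\ref{cor:or-cycle}. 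I expect this to be the main (minor) obstacle; everything else is a direct citation of the preceding results.
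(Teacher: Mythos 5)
Your proof is correct and follows essentially the same route as the paper's: dispose of the small-girth case via Proposition~\ref{prop:s-cycle}, get the short directed path property from Lemma~\ref{lem-diam}, use Theorem~\ref{thm:AES} and Corollary~\ref{cor:or-cycle} to reduce to the bipartite case, and finish with Lemma~\ref{lem:bip-case} and Lemma~\ref{lem:or-cycles}. The only cosmetic difference is that you split on bipartiteness and use the contrapositive of Theorem~\ref{thm:AES} to extract a short odd cycle, whereas the paper splits on whether a short odd cycle exists and uses the theorem directly to conclude bipartiteness --- these are the same two cases under the minimum degree hypothesis; also, your clause ``Since $H$ has girth at least $k$, it contains no directed cycle of length less than $k$'' plays no role in the application of Theorem~\ref{thm:AES} and could be dropped from that sentence.
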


\begin{proof}
Let~$k$,~$\ell$ and~$H$ be as in the lemma.  By
Proposition~\ref{prop:s-cycle} we may assume that~$H$ has
girth at least~$k$. This together with the assumption on
the minimum semidegree $\delta^0(H)>n/k$ and
Lemma~\ref{lem-diam} imply that~$H$ contains a
directed path of length at most~$64k$ between any ordered pair of vertices. 
Using Corollary~\ref{cor:or-cycle} we may assume that the
underlying graph of~$H$ contains no odd cycle of length less
than~$k$. Also observe that the underlying graph of~$H$ has
minimum degree greater than~$2n/k$. Now we can apply
Theorem~\ref{thm:AES} to the
underlying graph of~$H$ and deduce that~$H$ is bipartite. Recall that $(*)$ implies
that $k=p^s\ge 4$ for some prime~$p$ and some $s\in\mathbb{N}$. In particular,~$k$ is either odd or divisible by~$4$. 
Thus   Lemma~\ref{lem:bip-case}
implies that~$H$ contains a closed walk of length $a+b$
with~$a$ edges going forwards and~$b$ edges going backwards, for some
$a\neq b$ with $a+b<k$. Finally this together with Lemma~\ref{lem:or-cycles}
and our above observation that there must be a directed path
of length at most~$64k$ between any ordered pair of vertices of~$H$
imply that~$H$ contains a closed directed walk of length~$\ell$, as required.
\end{proof}

\begin{proof}[Proof of Theorem~\ref{thm:cycles}]

Apply Lemma~\ref{lem:KKO} with $c=\frac {1+\eta/2}{k}$,
$n_0=64k$ and $\varepsilon =\eta /2k$ to obtain  $n_1\in
\mathbb N$.

Let~$G$ be as in Theorem~\ref{thm:cycles}. Recall that by
 Lemma~\ref{lem:KKO}, in order to find an $\ell$-cycle in~$G$,
 it suffices to show that any oriented graph~$H$ on $n> 64k$ vertices and
 minimum semidegree $\delta^0(H)\ge (1+\eta/2)n/k$ contains a
 closed directed walk of length~$\ell$. This is done in
 Lemma~\ref{lem:resume}. 
\end{proof}
Finally, the following proposition shows that the condition that $n_0$ is large in Conjecture~\ref{conj:KDD} is necessary for $\ell>4$.
\begin{prop}\label{pro:counter}
Let $\ell> 4$ be an even integer and $k>2$ be minimal such that $k$ does not divide $\ell$. Then there exists a graph $G$ on $\lfloor\frac{k-1}2\rfloor(\ell-2)+1 $ vertices with $\delta^0(G)\ge \lfloor \frac nk\rfloor +1$ that does not contain any cycle of length greater than $\ell-1$.
\end{prop}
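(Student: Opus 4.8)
The plan is to give an explicit construction. Set $m:=\lfloor\tfrac{k-1}{2}\rfloor$, $n:=m(\ell-2)+1$ and $d:=\lfloor n/k\rfloor+1$; the graph will be an oriented graph (and ``cycle'' means directed cycle). Take a single ``hub'' vertex $v$ together with $m$ pairwise disjoint blocks $B_1,\dots,B_m$, each of size $\ell-2$, and put all edges inside the sets $B_i\cup\{v\}$ only, with no edge running between distinct blocks. Each set $B_i\cup\{v\}$ has exactly $\ell-1$ vertices; on it I would place a fixed oriented graph in which every vertex has out-degree and in-degree exactly $d$ — concretely the circulant obtained by cyclically ordering the $\ell-1$ vertices of $B_i\cup\{v\}$ (with $v$ in a fixed position) and orienting an edge from each vertex to its next $d$ successors. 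This is a genuine oriented graph precisely when $2d\le\ell-2$, which I verify below; it is then $d$-out-regular and $d$-in-regular. Using the same cyclic position for $v$ in every block, $v$ accumulates out-degree $md$ and in-degree $md$, while every block vertex keeps out-degree and in-degree $d$; hence $\delta^0(G)=d=\lfloor n/k\rfloor+1$ and $|V(G)|=1+m(\ell-2)=\lfloor\tfrac{k-1}2\rfloor(\ell-2)+1=n$, as required.

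Next I would check the one arithmetic point, $2d\le\ell-2$ (equivalently $2d<\ell-1$), on which the construction rests. Since $m\le\tfrac{k-1}{2}$ we have $n\le\tfrac{(k-1)(\ell-2)+2}{2}$, so $\tfrac nk\le\tfrac{(k-1)(\ell-2)+2}{2k}<\tfrac{k(\ell-2)}{2k}=\tfrac{\ell-2}{2}$, the strict inequality being exactly the hypothesis $\ell>4$ (it says $(k-1)(\ell-2)+2<k(\ell-2)$, i.e.\ $2<\ell-2$). As $\ell$ is even, $\tfrac{\ell-2}{2}=\tfrac\ell2-1$ is an integer, so $\tfrac nk<\tfrac\ell2-1$ forces $\lfloor n/k\rfloor\le\tfrac\ell2-2$ and hence $d\le\tfrac\ell2-1$, i.e.\ $2d\le\ell-2$. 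One also notes that $m=\lfloor\tfrac{k-1}{2}\rfloor\ge1$ since $k\ge3$, and that $\ell-2\ge1$, so the construction is non-degenerate.

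Finally I would argue that $G$ has no directed cycle of length greater than $\ell-1$. Let $Z$ be a directed cycle of $G$. If $Z$ avoids $v$ it lies inside a single block $B_i$, so $|Z|\le\ell-2$. If $v\in Z$, then $v$ occurs on $Z$ exactly once; following $Z$ out of $v$ into some block $B_i$, every further vertex of $Z$ is reachable from $B_i$ without using $v$ again and therefore lies in $B_i$ (no edges leave $B_i$ except to $v$), so $Z\subseteq B_i\cup\{v\}$ and $|Z|\le\ell-1$. Either way $|Z|\le\ell-1$, which completes the argument. The only non-routine step is the degree inequality of the second paragraph, and it reduces to $\ell>4$; everything else is bookkeeping, so I do not expect a genuine obstacle — the shape of the statement essentially forces the construction, since $n/k$ sits just below $\tfrac\ell2-1$ and a vertex confined to an $(\ell-1)$-vertex block can have semidegree at most $\tfrac{\ell-2}{2}$.
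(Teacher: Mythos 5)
Your construction is essentially the same as the paper's: both take $\lfloor\frac{k-1}{2}\rfloor$ oriented graphs on $\ell-1$ vertices glued at a single hub vertex, with the same cycle-length argument and the same arithmetic reduction of $\lfloor n/k\rfloor+1\le\frac{\ell-2}{2}$ to $\ell>4$. The only cosmetic difference is that the paper fills each block with a regular tournament (giving semidegree exactly $\frac{\ell-2}{2}$), whereas you use a circulant of the minimal degree $d=\lfloor n/k\rfloor+1$; both work, and your verification that $2d\le\ell-2$ is correct.
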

\begin{proof}
Let $G$ be the union of $\lfloor\frac{k-1}2\rfloor$ regular tournaments on $\ell-1$ vertices sharing a single vertex. Then $G$ does not contain any cycle of length greater than $\ell-1$, $\delta^0(G)=\frac {\ell-2}2$ and 
$n=\lfloor\frac{k-1}2\rfloor(\ell-2)+1$. Also
\[
\left\lfloor\frac nk\right\rfloor\le \left\lfloor\frac {\frac{k-1}{2}(\ell-2)+1}k\right\rfloor
= \left\lfloor\frac {\ell-2}2-\frac {\ell-4}{2k}\right\rfloor
\le  \frac {\ell-2}2-1=\delta^0(G)-1\;.
\]
\end{proof}
\section{Acknowledgement}
The authors would like to thank the referees for a careful reading of the manuscript and for pointing out an oversight in our proof.
\thebibliography{llll}
\bibitem{cite:AS}
{N.~ Alon, A.~Shapira. Testing subgraphs in directed graphs,
\emph{J.~Comput.~System Sci.} 69 (2004), 353--382.}

\bibitem{cite:AES}
{B.~Andr\'asfai, P.~Erd\H os, V.~T.~S\'os. On the connection
between chromatic number, maximal clique and minimal degree
of a graph. \emph {Discrete Mathematics} 8 (1974) 205--218.}

\bibitem{cite:Cacc-Hagg}
{L.~Caccetta, R.~H\"aggkvist. On minimal graphs with given
girth, \emph{Proceedings of the ninth Southeastern Conference
on Combinatorics, Graph Theory and Computing, Congress.
Numerantium XXI, Utilitas Math.} (1978), 181--187.}

\bibitem{cite:Diest}
{R.~Diestel. Graph Theory
(4th Edition), 
\emph{Springer-Verlag}, 2010.
}

     
\bibitem{cite:HKN-ArXiv}
{J.~Hladk\'y, D.~Kr\'al, S.~Norine. Counting flags in triangle-free
  digraphs, 	arXiv:0908.2791.}

\bibitem{cite:KKO}
{L.~Kelly, D.~K\"uhn, D.~Osthus. Cycles of given length in
oriented graphs, \emph{J. Combinatorial Theory Series B} 100
(2010), 251--264. }

\bibitem{cite:KKO2}
{L.~Kelly, D.~K\"uhn, D.~Osthus. A Dirac type result for oriented graphs,
\emph{Combinatorics, Probability and Computing} 17 (2008), 689--709.}

\bibitem{KOsurvey} D. K\"uhn and D. Osthus,
A survey on Hamilton cycles in directed graphs, 
\emph{European J. Combinatorics} {\bf 33} (2012), 750--766.

\bibitem{cite:shen}
{J.~Shen. On the Caccetta-H\"aggkvist conjecture, \emph{Graphs
and Combinatorics} 18 (2002), 645--654. }

  \bibitem{cite:Sulivan}
  {B.~D.~Sullivan. A summary of results and problems related to the
  Caccetta-H\"aggkvist conjecture, arXiv:math/0605646.}

  Daniela K\"uhn, Deryk Osthus, Diana Piguet\\
  School of Mathematics\\
  University of Birmingham\\
  Edgbaston\\
  Birmingham\\
  B15~2TT\\ 
  UK\\
  \\
  \texttt{e-mail addresses:\\
  \{d.kuhn, d.osthus, d.piguet\}@bham.ac.uk}
  
\end{document}